\begin{document}

\begin{abstract}
A Sasaki-like almost contact complex Riemannian manifold is
defined as an almost contact complex Riemannian manifold which
complex cone is a holomorphic complex Riemannian manifold.
Explicit compact and non-compact examples are given. A
canonical construction producing a Sasaki-like almost contact
complex Riemannian manifold from a holomorphic complex
Riemannian manifold is presented and called an $S^1$-solvable
extension.
\end{abstract}

\keywords{almost contact complex Riemannian manifolds, holomorphic
complex Riemannian manifold, $S^1$-solvable extension}
\subjclass[2010]{53C15, 53C25, 53C50}

\title{Sasaki-like almost contact complex Riemannian manifolds}

\author[S. Ivanov, H. Manev, M. Manev]{Stefan Ivanov, Hristo Manev, Mancho Manev}
\address[S. Ivanov]{University of Sofia, Faculty of Mathematics and
Informatics, 5 James Bourchier Blvd, 1164 Sofia, Bulgaria}
\address{and Institute of Mathematics and Informatics, Bulgarian Academy of
Sciences} \email{ivanovsp@fmi.uni-sofia.bg}
%\author{Mancho Manev}
%\author{Hristo Manev}
\address[H. Manev, M. Manev]{University of Plovdiv, Faculty of
Mathematics and Informatics, 236 Bulgaria Blvd, 4027 Plovdiv,
Bulgaria} \email{hmanev@uni-plovdiv.bg, mmanev@uni-plovdiv.bg}

%%% Theorem Dike Envirouments

%%%% Local Definitions start here
\frenchspacing

\newcommand{\ie}{i.e. }
\newcommand{\X}{\mathfrak{X}}
\newcommand{\F}{\mathcal{F}}
\newcommand{\M}{(M,\allowbreak{}\f,\allowbreak{}\xi,\allowbreak{}\eta,\allowbreak{}g)}
\newcommand{\R}{\mathbb{R}}
\newcommand{\C}{\mathbb{C}}
\newcommand{\n}{\nabla}
\newcommand{\f}{\varphi}
\newcommand{\ta}{\theta}
\newcommand{\lm}{\lambda}
\newcommand{\om}{\omega}
\newcommand{\Span}{\mathrm{span}}
\newcommand{\Id}{\mathrm{Id}}
\newcommand{\D}{\mathrm{d}}
\newcommand{\ddr}{\tfrac{\D}{\D r}}
\newcommand{\g}{\check{g}}
\newcommand{\nn}{\check{\n}}
\newcommand{\ddu}[1]{\frac{\partial}{\partial u^{#1}}}
\newcommand{\ddv}[1]{\frac{\partial}{\partial v^{#1}}}
\newcommand{\ddt}{\tfrac{\D}{\D t}}

\newcommand{\thmref}[1]{The\-o\-rem~\ref{#1}}
\newcommand{\propref}[1]{Pro\-po\-si\-ti\-on~\ref{#1}}
\newcommand{\secref}[1]{\S\ref{#1}}
\newcommand{\lemref}[1]{Lem\-ma~\ref{#1}}
\newcommand{\dfnref}[1]{De\-fi\-ni\-ti\-on~\ref{#1}}
\newcommand{\corref}[1]{Corollary~\ref{#1}}
%\newcommand{\eqref}[1]{(\ref{#1})}

%\renewcommand{\thefootnote}{\fnsymbol{footnote}}

% THEOREM Environments ---------------------------------------------------
%\newtheorem{thm}{Theorem}
%\newtheorem{lem}[thm]{Lemma}
%\newtheorem{prop}[thm]{Proposition}
%\newtheorem{cor}[thm]{Corollary}
%\newdefinition{rmk}{Remark}
%\newdefinition{ack}{Acknowledgements}
%\newproof{pf}{Proof}
%\newproof{pot}{Proof of Theorem \ref{thm-geom}}

%\numberwithin{equation}{section}
\newtheorem{thm}{Theorem}[section]
\newtheorem{lem}[thm]{Lemma}
\newtheorem{prop}[thm]{Proposition}
\newtheorem{cor}[thm]{Corollary}
\newtheorem{conv}[thm]{Convention}
\newtheorem{rmk}[thm]{Remark}

\theoremstyle{definition}
\newtheorem{defn}{Definition}[section]

\hyphenation{Her-mi-ti-an ma-ni-fold ah-ler-ian}

%\keywords{ }

%\subjclass[2000]{} %

%%%% End of Local Definitions
%{\small
%\begin{abstract}
%\end{abstract}

%%% ----------------------------------------------------------------------
\maketitle
%%% ----------------------------------------------------------------------

\begin{center}
\date{\today}
\end{center}

\setcounter{tocdepth}{2} \tableofcontents

%\vspace{1cm}

\section{Introduction}
The almost contact complex Riemannian manifold is an
odd-dimensional pseudo-Riemannian manifold equipped with a 1-form
$\eta$ and a codimension one distribution $H=Ker(\eta)$ endowed
with a complex Riemannian structure. More precisely, the
$2n$-dimensional distribution $H$ is equipped with a pair
consisting of an almost complex structure and a pseudo-Riemannian
metric of signature $(n,n)$ compatible in the way that  the almost
complex structure acts as an anti-isometry on the metric. Almost
contact complex Riemannian manifolds are investigated and studied
in \cite{GaMiGr,Man4,Man31,ManGri1,ManGri2,MI, MI1,NakGri2}.

The main goal of this note is to find  a  class of almost contact
complex  Riemannian manifolds resemble some basic properties of
the well known Sasakian manifolds. We define the class of
Sasaki-like spaces as an almost contact complex Riemannian
manifold which complex cone is a holomorphic complex Riemannian
manifold. We note that a holomorphic complex Riemannian manifold
is a complex manifold endowed with a complex Riemannian metric
whose local components in holomorphic coordinates are holomorphic
functions (see \cite{Manin}). We  determine the Sasaki-like almost
contact complex Riemannian  structure  with an explicit expression
of the covariant derivative of the structure tensors (cf.
Theorem~\ref{ssss}) and construct explicit compact and non-compact
examples. We also present a canonical construction producing a
Sasaki-like almost contact complex Riemannian manifold from any
holomorphic complex Riemannian manifold  which we called an
$S^1$-\emph{solvable extension} (cf. Theorem~\ref{ext}). Studying
the curvature of Sasaki-like spaces we show that it is completely
determined by the curvature of the underlying holomorphic complex
Riemannian manifold. We develop  gauge transformations of
Sasaki-like spaces, i.e. we  find the class of contact conformal
transformations of an almost contact complex Riemannian manifolds
which preserve the Sasaki-like condition.

\begin{conv}%\hfill\break\vspace{-15pt} %
Let $\M$ be a $(2n+1)$-dimensional almost contact complex
Riemannian manifold with a pseudo-Riemannian metric $g$ of
signature $(n+1,n)$.
\begin{enumerate}[a)]
\item[\emph{a)}] %We shall use  $A,B,C$ to denote smooth vector fields on
%$M$, $A,B,C\in\X(M)$.
We shall use  $x$, $y$, $z$, $u$ to denote smooth vector fields on
$M$, \ie $x,y,z,u\in\X(M)$.

\item[\emph{b)}] We shall use $X$, $Y$, $Z$, $U$
to denote smooth horizontal vector fields on $M$, i.e. $%
X,Y,Z,U\in H=\ker(\eta)$.

\item[\emph{c)}]  The $2n$-tuple
$\{e_1,\dots,e_{n},e_{n+1}=\f e_1,\dots,e_{2n}=\f e_n\}$ denotes a
local orthonormal basis of the horizontal space $H$.

\item[\emph{d)}] For an orthonormal basis
$
\{e_0=\xi,e_1,\dots,e_{n},e_{n+1}=\f e_1,\dots,e_{2n}=\f e_n\}
$ we
denote $\varepsilon_i=\mathrm{sign}(g(e_i,e_i))=\pm 1$, where
$\varepsilon_i=1$ for $i=0,1,\dots,n$ and $\varepsilon_i=-1$ for
$i=n+1,\dots,2n$.
\end{enumerate}
\end{conv}

%%%%%%%%%%%%%%%%%%%%%%%%%%%%%%%%%%

\section{Almost contact complex Riemannian manifolds}

Let $(M,\f,\xi,\eta)$ be an almost contact manifold,  i.e.  $M$ is
a $(2n+1)$-dimensional differen\-tia\-ble manifold with an almost
contact structure $(\f,\xi,\eta)$ consisting of an endomorphism
$\f$ of the tangent bundle, a vector field $\xi$ and its dual
1-form $\eta$ such that the following algebraic relations are
satisfied:
\begin{equation}\label{str1}
\f\xi = 0,\quad \f^2 = -\Id + \eta \otimes \xi,\quad
\eta\circ\f=0,\quad \eta(\xi)=1.
\end{equation}
An almost contact structure $(\f,\xi,\eta)$ on $M$ is called
\emph{normal} and respectively $(M,\f,\allowbreak{}\xi,\eta)$ is a
\emph{normal almost contact manifold} if the corresponding almost
complex structure $\check J$  on $M'=M\times \R$ defined by
\begin{equation}\label{concom}
\check JX=\f X,\qquad \check J\xi=r\ddr,\qquad \check J\ddr=-\tfrac{1}{r}\xi
\end{equation}
 is integrable (i.e.
$M'$ is a complex manifold) \cite{SaHa}. The almost contact
structure is normal if and only if the Nijenhuis tensor of
$(\f,\xi,\eta)$ is zero \cite{Blair}. The Nijenhuis tensor $N$ of
the almost contact structure is defined by
$$ N = [\f, \f]+\D{\eta}\otimes\xi,\quad
[\f, \f](x, y)=\left[\f x,\f
y\right]+\f^2\left[x,y\right]-\f\left[\f x,y\right]-\f\left[x,\f
y\right],$$
where $[\f,\f]$ is the Nijenhuis torsion of $\f$.

Let the almost contact manifold $(M,\f,\xi,\eta)$ be endowed with
a pseudo-Riemann\-ian metric $g$ of signature $(n+1,n)$ which is
compatible with the almost contact structure in the following way
\[
g(\f x, \f y ) = - g(x, y ) + \eta(x)\eta(y).
\]
The associated metric $\widetilde{g}$ of $g$ on $M$ is defined by
$\widetilde{g}(x,y)=g(x,\f y)\allowbreak+\eta(x)\eta(y)$. Both
metrics $g$ and $\widetilde{g}$  are necessarily of signature
$(n+1,n)$.

The manifold  $(M,\f,\xi,\eta,g)$ is known as an almost contact
manifold with B-metric or an \emph{almost contact B-metric
manifold} \cite{GaMiGr}. The manifold
$(M,\f,\xi,\eta,\widetilde{g})$ is also an almost contact B-metric
manifold. We will call these manifolds \emph{almost contact
complex Riemannian manifolds}.

The structure group of the almost contact complex Riemannian
manifolds is
$O(n,\mathbb{C})\allowbreak\times
I_1=\allowbreak\bigl(GL(n,\mathbb{C})\cap
O(n,n)\bigr)\allowbreak\times I_1$, i.e. it consists of real
square matrices of order $2n+1$ of the following type
\[
\left(%
\begin{array}{r|c|c}
  A & B & \vartheta^T\\ \hline
  -B & A & \vartheta^T\\ \hline
  \vartheta & \vartheta & 1 \\
\end{array}%
\right),\qquad %
\begin{array}{l}
  A^TA-B^TB=I_n,\\%
  B^TA+A^TB=O_n,
\end{array}%
\quad A, B\in GL(n;\mathbb{R}),
\]
where $\vartheta$ and its transpose $\vartheta^T$ are the zero row
$n$-vector and the zero column $n$-vector; $I_n$ and $O_n$ are the
unit matrix and the zero matrix of size $n$, respectively.

%\subsection{The structural tensor $F$}

The covariant derivatives of $\f$, $\xi$, $\eta$ with respect to
the Levi-Civita connection $\n$ play a fundamental role in the
differential geometry on the almost contact manifolds.  The
structure tensor $F$ of type (0,3) on $\M$ is defined by
\begin{equation}\label{F=nfi}
F(x,y,z)=g\bigl( \left( \nabla_x \f \right)y,z\bigr).
\end{equation}
It has the following properties \cite{GaMiGr}:
\begin{equation}\label{F-prop}
\begin{split}
F(x,y,z)&=F(x,z,y)=F(x,\f y,\f z)+\eta(y)F(x,\xi,z)
+\eta(z)F(x,y,\xi).
\end{split}
\end{equation}
The relations of $\n\xi$ and $\n\eta$ with $F$ are:
\begin{equation}\label{Fxieta}
    \left(\n_x\eta\right)y=g\left(\n_x\xi,y\right)=F(x,\f y,\xi).
\end{equation}

The following 1-forms  associated with $F$:
\[
\ta(z)=\sum_{i=1}^{2n}\varepsilon_iF(e_i,e_i,z),\quad
\ta^*(z)=\sum_{i=1}^{2n}\varepsilon_iF(e_i,\f e_i,z)
\]
satisfy the obvious relation $\ta^*\circ\f=-\ta\circ\f^2$.

Besides the Nijenhuis tensor $N$, the following symmetric
(1,2)-tensor $\widehat N$ is defined in \cite{MI1} as follows:
consider the symmetric brackets $\{x,y \}$ given by
\[
g(\{x,y\},z)=g(\nabla_xy+\nabla_yx,z)=xg(y,z)+yg(x,z)-zg(x,y)-g([y,z],x)+g([z,x],y);
\]
set
\[
\{\f ,\f\}(x,y)=\{\f x,\f y\}+\f^2\{x,y\}-\f\{\f x,y\}-\f\{x,\f
y\}
\]
and define the symmetric tensor $\widehat N$ as follows
\cite{MI1}
\[
\widehat N=\{\f,\f\}+(\mathcal L_{\xi}g)\otimes \xi,
\]
where $\mathcal L$ denotes the Lie derivative.
The tensor $\widehat N$ is also called  \emph{the associated
Nijenhuis tensor}.

We define the corresponding tensors of type (0,3) by the same
letter, $N(x,y,z)=g(N(x,y),z)$, $\widehat N(x,y,z)=g(\widehat
N(x,y),z)$. Both tensors $N$ and $\widehat N$ can be expressed in
terms of the fundamental tensor $F$ as follows \cite{MI1}
\begin{gather}
\begin{array}{l}
N(x,y,z)=F(\f x,y,z)-F(\f y,x,z)-F(x,y,\f z)+F(y,x,\f z)
\label{enu}\\[4pt]
\phantom{N(x,y,z)=}+\eta(z)\bigl[F(x,\f y,\xi)-F(y,\f
x,\xi)\bigr],
\end{array}\\[4pt]
\begin{array}{l}
\widehat N(x,y,z)=F(\f x,y,z)+F(\f y,x,z)-F(x,y,\f z)-F(y,x,\f
z)\\[4pt]
\phantom{\widehat N(x,y,z)=}+\eta(z)\bigl[F(x,\f y,\xi)+F(y,\f
x,\xi)\bigr].\label{enhat}
\end{array}
\end{gather}

\subsection{Relation with holomorphic complex Riemannian manifolds}
Let us remark that the $2n$-dimensional  distribution
$H=\ker(\eta)$  is endowed with an almost complex structure
$J=\f|_H$ and a metric $h=g|_H$, where $\f|_H$, $g|_H$ are  the
restrictions of $\f$, $g$ on $H$, respectively, and the metric $h$
is compatible with $J$ as follows
\begin{equation}\label{norden}
h(JX,JY)=-h(X,Y) ,\quad \widetilde{h}(X,Y):=h(X,JY).
\end{equation}
The distribution  $H$ can be considered as an $n$-dimensional
complex Riemannian distribution with a complex Riemannian metric
$g^{\C}=h+i\widetilde h=g|_H+i\widetilde{g}|_H$.

We recall that a $2n$-dimensional almost complex manifold
$(N,J,h)$ endowed with a pseudo-Riemannian metric of signature
$(n,n)$ satisfying \eqref{norden} is known as an almost complex
manifold with Norden metric \cite{N1,N2,v,GGM0,KS,KO},  an almost
complex manifold with B-metric \cite{GGM,GM}  or an almost complex
manifold with complex Riemannian metric \cite{LeB,Manin,GaIv,BFF,Low}.
When the almost complex structure $J$ is parallel with respect to
the Levi-Civita connection $\nabla^h$ of the metric $h$,
$\nabla^hJ=0$, then the manifold is known as a K\"ahler-Norden
manifold, a K\"ahler manifold with B-metric or a holomorphic
complex Riemannian manifold. In this case  the almost complex
structure $J$ is integrable and the local components of the complex metric
in holomorphic coordinate system are holomorphic functions.
A four-dimensional example of a K\"ahler manifold with Norden metric
has been given in \cite{N1}, another approach to the
K\"ahler manifolds with Norden metric has been used in \cite{N2}
and in \cite{v}, there has been proved that the four-dimensional
sphere of Kotel'nikov-Study carries a structure of a K\"ahler
manifold with Norden metric.

\subsection{The  case of parallel structures} The simplest case  of
almost contact complex Riemannian  manifolds  is when the
structures are $\n$-parallel, $\n\f=\n\xi=\n\eta=\n g=\n
\widetilde{g}=0$, and it is determined by the condition
$F(x,y,z)=0$. In this case the distribution $H$ is involutive. The
corresponding integral submanifold is a totally geodesic
submanifold which inherits a holomorphic complex Riemannian
structure and the almost contact complex Riemannian manifold is
locally  a pseudo-Riemannian product of a holomorphic complex
Riemannian manifold with a real interval.

\section{Sasaki-like almost contact complex Riemannian manifolds}
In this section we consider the complex Riemannian cone over an
almost contact complex Riemannian manifold and determine a
Sasaki-like almost contact complex Riemannian manifold with the
condition that its complex Riemannian cone is a holomorphic complex
Riemannian manifold.

\subsection{Holomorphic complex Riemannian cone}
Let $\M$ be an almost contact Riemannian manifold of dimension
$2n+1$. We consider the cone over $M$   %$$$\mathfrak{Cone}$
%$\mathbb Cone$ %
$\mathcal{C}(M)=M\times \R^-$ with the  almost complex structure
determined in  \eqref{concom} and the complex Riemannian metric
defined by
\begin{equation}\label{barg}
\check{g}\left(\left(x,a\ddr\right),\left(y,b\ddr\right)\right)
=r^2g(x,y)+\eta(x)\eta(y)-ab,
\end{equation}
where $r$ is the coordinate on $\R^-$ and $a$, $b$ are
$C^{\infty}$ functions on $M\times \R^-$.

Using the general Koszul formula
\begin{multline}\label{koszul}
2g(\nabla_xy,z)=xg(y,z)+yg(z,x)-zg(x,y)\\%[4pt]
%\phantom{g(\nabla_xy,z)=\frac12\bigl[}
+g([x,y],z)-g([y,z],x)+g([z,x],y),
\end{multline}
we calculate from \eqref{barg} that the  non-zero components of
the Levi-Civita connection $\nn$ of the complex Riemannian metric
$\check g$ on $\mathcal{C}(M)$ are given by
\[
\begin{array}{l}
    \g\left(\nn_X Y,Z\right)=r^2 g\left(\n_X Y,Z\right),\qquad
    \g\left(\nn_X Y,\ddr\right)=-r g\left(X, Y\right), %
    \\[4pt]
    \g\left(\nn_X Y,\xi\right)=r^2 g\left(\n_X
    Y,\xi\right)+\tfrac{1}{2}\left(r^2-1\right)\D\eta(X,Y),
\\[4pt]
    \g\left(\nn_X \xi,Z\right)=r^2 g\left(\n_X \xi,
    Z\right)-\tfrac{1}{2}\left(r^2-1\right)\D\eta(X,Z),
    \\[4pt]
    \g\left(\nn_{\xi} Y,Z\right)=
    r^2 g\left(\n_{\xi}Y,Z\right)-\tfrac{1}{2}(r^2-1)\D\eta(Y,Z),
    \\[4pt]
    \g\left(\nn_{\xi} Y,\xi\right)= g\left(\n_{\xi}Y,\xi\right), \qquad
    \g\left(\nn_{\xi} \xi,Z\right)= g\left(\n_{\xi}\xi,Z\right),
    \\[4pt]
    \g\left(\nn_X \ddr,Z\right)=r g\left(X,Z\right), \qquad
    \g\left(\nn_{\ddr} Y,Z\right)=r g\left(Y,Z\right).
\end{array}
\]
Applying \eqref{concom} we calculate from the formulas above  that
the non-zero components of the covariant derivative $\nn \check J$
of the almost complex structure $\check J$ are given by
\[
\begin{array}{rl}
    &\g\left(\left(\nn_X \check J\right)Y,Z\right)
    =r^2 g\left(\left(\n_X \f\right)Y,Z\right),
    \\[4pt]
    &\g\left(\left(\nn_X  \check J\right)Y,\xi\right)
    =r^2 \left\{g\left(\left(\n_X \f\right)Y,\xi\right)
    +g(X,Y)\right\}+\tfrac{1}{2}\left(r^2-1\right)\D\eta(X,\f Y),%
    \\[4pt]
%    &\phantom{\g\left(\left(\nn_X  \check J\right)Y,\xi\right)=}
%    +\frac{1}{2}\left(r^2-1\right)\D\eta(X,\f Y),
%   \nonumber\\[4pt]
    &\g\left(\left(\nn_X  \check J\right)Y,\ddr\right)=-r\left\{g\left(\n_X \xi, Y\right)
    +g\left(X,\f Y\right)\right\}+\tfrac{1}{2r^2}(r^2-1)\D\eta(X,Y),%
\\[4pt]
    &\g\left(\left(\nn_X  \check J\right)\xi,Z\right)=-r^2\left\{g\left(\n_X \xi,\f Y\right)
    -g\left(X,Z\right)\right\}+\tfrac{1}{2}(r^2-1)\D\eta(X,\f Z),
    \\[4pt]
    &\g\left(\left(\nn_X  \check J\right)\ddr,Z\right)=-r\left\{g\left(\n_X \xi, Z\right)
    +g\left(X,\f Z\right)\right\}+\tfrac{1}{2r}(r^2-1)\D\eta(X,Z),
    \\[4pt]
    &\g\left(\left(\nn_{\xi}  \check J\right)Y,Z\right)
    =r^2g\left(\left(\n_{\xi}\f\right)Y,Z\right)
    -\tfrac{1}{2}(r^2-1)\left\{\D\eta(\f Y, Z)-\D\eta(Y,\f Z)\right\},
    \\[4pt]
    &\g\left(\left(\nn_{\xi}  \check J\right)Y,\xi\right)=-g(\n_{\xi}\xi,\f Y), \qquad
    \g\left(\left(\nn_{\xi}  \check J\right)\xi,Z\right)=-g\left(\n_{\xi}\xi, \f Z\right),
    \\[4pt]
    &\g\left(\left(\nn_{\xi}  \check J\right)Y,\ddr\right)=
    -\tfrac{1}{r}g\left(\n_{\xi}\xi, Y\right), \qquad  \g\left(\left(\nn_{\xi}
    \check J\right)\ddr,Z\right)=-\tfrac{1}{r}g\left(\n_{\xi}\xi, Z\right).
\end{array}
\]
We have
\begin{prop}\label{defs}
The complex Riemannian cone $\mathcal{C}(M)$ over an almost
contact complex Riemannian manifold $\M$  is a holomorphic complex
Riemannian space if and only if the following conditions hold
\begin{alignat}{1}
&F(X,Y,Z)=F(\xi,Y,Z)=F(\xi,\xi,Z)=0\label{sasaki},\\[4pt]
&F(X,Y,\xi)=-g(X,Y)\label{sasaki0}.%,\\[4pt]
%    \left(\n_X\eta\right)Y&=-g(\f X, Y)=g(  \n_X
%\xi,Y)\label{sasaki1}.
\end{alignat}
\end{prop}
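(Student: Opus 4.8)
The plan is to use the characterization, recalled in the subsection on holomorphic complex Riemannian manifolds, that $\mathcal{C}(M)$ is a holomorphic complex Riemannian space precisely when $\check J$ is parallel with respect to the Levi-Civita connection $\check{\nabla}$ of $\check g$, that is $\check{\nabla}\check J=0$. Since the non-zero components of $\check{\nabla}\check J$ on the pairs $(X,Y)$, $(X,\xi)$, $(X,\ddr)$, $(\xi,Y)$ and so on have already been computed above, the whole task reduces to reading off when every one of those expressions vanishes identically on $\mathcal{C}(M)=M\times\R^-$, and translating the result back into statements about $F$ via \eqref{F=nfi} and \eqref{Fxieta}. The crucial device is that each component is a Laurent polynomial in the cone coordinate $r$; since it must vanish for every $r$, I would match the coefficients of the distinct powers $r^2,r,1,r^{-1},r^{-2}$ separately. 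This is exactly what separates the genuinely new metric condition \eqref{sasaki0} from the various $\D\eta$-terms.

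Carrying this out, I would first note that the purely horizontal component equals $r^2F(X,Y,Z)$, so its vanishing is precisely $F(X,Y,Z)=0$. The two components containing $\nabla_\xi\xi$ force $g(\nabla_\xi\xi,Y)=g(\nabla_\xi\xi,\varphi Y)=0$; together with $g(\nabla_\xi\xi,\xi)=\tfrac12\xi\,g(\xi,\xi)=0$ this gives $\nabla_\xi\xi=0$, which by \eqref{Fxieta} and the symmetry in \eqref{F-prop} is the same as $F(\xi,\xi,Z)=0$. The component $\check g\bigl((\check{\nabla}_\xi\check J)Y,Z\bigr)$, after matching its $r^2$- and $r^0$-coefficients, yields both $F(\xi,Y,Z)=0$ and an identity on $\D\eta$ over $H$. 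Finally the components with $\xi$ in the last slot, e.g. $\check g\bigl((\check{\nabla}_X\check J)Y,\xi\bigr)$, split—once the lower-degree terms are used to kill the $\D\eta$-contributions—into the top-degree condition $F(X,Y,\xi)+g(X,Y)=0$, i.e. \eqref{sasaki0}, together with $\D\eta(X,\varphi Y)=0$; the components with $\ddr$ in the last slot likewise produce $\D\eta(X,Y)=0$ on $H$. This establishes the forward implication.

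For the converse I would assume \eqref{sasaki}–\eqref{sasaki0} and check that every listed component vanishes. The one point needing care is the role of $\D\eta$: I would show that \eqref{sasaki0} already forces $\D\eta$ to vanish on $H\times H$, using $\D\eta(X,Y)=F(X,\varphi Y,\xi)-F(Y,\varphi X,\xi)$ together with the symmetry $g(X,\varphi Y)=g(Y,\varphi X)$ coming from the symmetry of the associated metric $\widetilde g$ on $H$; a parallel computation gives $\D\eta(X,\varphi Y)=0$. Once the $\D\eta$-terms are known to drop, substituting \eqref{sasaki}, \eqref{sasaki0} and $\nabla_\xi\xi=0$ into each component makes it vanish, which completes the equivalence. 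I expect the main obstacle to be bookkeeping rather than conceptual: one must verify that the several conditions extracted by matching different powers of $r$ are mutually consistent and do not over-determine the structure—precisely the check that \eqref{sasaki0} is compatible with the forced vanishing of $\D\eta$ on the contact distribution.
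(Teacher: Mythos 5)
Your proposal is correct and follows essentially the same route as the paper: both impose $\nn\check J=0$ on the precomputed components of $\nn\check J$ and then reduce the resulting $r$-dependent conditions to \eqref{sasaki}--\eqref{sasaki0} by showing $\D\eta=0$. The only immaterial difference is that you extract $\D\eta|_{H\times H}=0$ in the forward direction by matching powers of $r$, whereas the paper antisymmetrizes $(\n_X\eta)Y$ and invokes the symmetry of $\widetilde g$ --- the very argument you then use for the converse.
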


\begin{proof}
We obtain from the expressions above that the complex Riemannian
cone $(\mathcal{C}(M),\check J,\check g)$ is a holomorphic Riemannian
manifold (a K\"ahler manifold with Norden metric), \ie $\nn \check J=0$,
if and only if  the almost contact complex Riemannian manifold
$\M$ satisfies the following conditions
\begin{alignat}{1}
    &F(X,Y,Z)=0,
    \nonumber\\[4pt]
    &F(X,Y,\xi)=-g(X,Y)-\tfrac{1}{2r^2}\left(r^2-1\right)\D\eta(X,\f Y)\label{FXYxi},\\[4pt]
%    &F(X,\f Y,\xi)=-g(X,\f
%    Y)+\tfrac{1}{2r^2}\left(r^2-1\right)\D\eta(X,Y),\label{FXYxi}\\[4pt]
    &F(\xi,Y,Z)=\tfrac{1}{2r^2}\left(r^2-1\right)\left\{\D\eta(\f Y,Z)
    -\D\eta(Y,\f Z)\right\},\label{FxiYZ}\\[4pt]
    &F(\xi,\xi,Z)=0, \qquad \n_{\xi}\xi=0\label{nxixi}.
\end{alignat}
The condition $\nn \check J=0$ implies the integrability of $\check J$,
hence the structure is normal.
%The equality  \eqref{nxixi} shows the that the integral curves of $\xi$ are geodesics.

Further, according to \eqref{FXYxi}, we get
\begin{equation}\label{nablaeta}
\left(\n_X\eta\right)Y=-g(X,\f
Y)+\tfrac{1}{r^2}\left(r^2-1\right)\D\eta(X,Y),
\end{equation}
yielding
$\D\eta(X,Y)=\tfrac{1}{2r^2}\left(r^2-1\right)\D\eta(X,Y)$ since
the metric $\widetilde g$ is symmetric. The latter equality  shows
$\D\eta(X,Y)=0$ which together with \eqref{nablaeta} yields
\begin{equation}\label{sasaki1}
\left(\n_X\eta\right)Y=-g(X,\f  Y).
\end{equation}
From \eqref{nxixi} we get
$\D\eta(\xi,X)=(\n_{\xi}\eta)(X)-(\n_X\eta)(\xi)=0$. Hence,
$\D\eta=0$. Substitute $\D\eta=0$ into
\eqref{FXYxi}-\eqref{FxiYZ} to complete the proof of the
proposition.
\end{proof}

\begin{defn}
An almost contact complex Riemannian manifold $\M$  is said to be
\emph{Sasaki-like} if the structure tensors $\f, \xi, \eta, g$
satisfy the equalities \eqref{sasaki} and \eqref{sasaki0}.
\end{defn}

To characterize the Sasaki-like almost contact complex Riemannian
manifold by the structure tensors, we need the next general
result
\begin{thm}\label{gen}
Let $\M$ be an almost contact complex Riemannian manifold.
Then the covariant derivative of $\f$ is given by the formula
\begin{equation}\label{nabf}
\begin{split}
g(\n_x\f)y,z)&=F(x,y,z)\\
&=-\frac14\bigl[N(\f x,y,z)+N(\f x,z,y)
+\widehat N(\f x,y,z)+\widehat N(\f x,z,y)\bigr]\\
&\phantom{=\ }+\frac12\eta(x)\bigl[N(\xi,y,\f z)+\widehat
N(\xi,y,\f z)+\eta(z)\widehat N(\xi,\xi,\f y)\bigr].
\end{split}
\end{equation}
\end{thm}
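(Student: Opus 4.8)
The plan is to read \eqref{nabf} as a pointwise algebraic identity between the tensors $F$, $N$, $\widehat N$: since $N$ and $\widehat N$ are, by \eqref{enu} and \eqref{enhat}, explicit linear combinations of $F$ evaluated at $\f$-twisted arguments, it suffices to substitute those expressions into the right-hand side and verify that everything collapses to $F(x,y,z)$. The efficient first move is to add \eqref{enu} and \eqref{enhat}, since the antisymmetric cross terms cancel, leaving the clean relation
\[(N+\widehat N)(x,y,z)=2F(\f x,y,z)-2F(x,y,\f z)+2\eta(z)F(x,\f y,\xi).\]
I would then replace $x$ by $\f x$ and use $\f^2=-\Id+\eta\otimes\xi$ from \eqref{str1}, so that $F(\f^2x,y,z)=-F(x,y,z)+\eta(x)F(\xi,y,z)$ produces the principal term $-2F(x,y,z)$ together with a purely vertical $\eta(x)$-correction.

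First I would settle the horizontal part. Symmetrising the previous relation in $y$ and $z$ (precisely the combination appearing in \eqref{nabf}) and using the last-two-slot symmetry $F(x,y,z)=F(x,z,y)$ from \eqref{F-prop}, one is left with the two groups $-2\bigl[F(\f x,y,\f z)+F(\f x,z,\f y)\bigr]$ and $+2\bigl[\eta(z)F(\f x,\f y,\xi)+\eta(y)F(\f x,\f z,\xi)\bigr]$. The key step is to show that these cancel: applying the $\f$-compatibility identity in \eqref{F-prop} to $F(\f x,\f z,y)$ and using $\eta\circ\f=0$ together with $\f^2=-\Id+\eta\otimes\xi$ gives $F(\f x,y,\f z)+F(\f x,z,\f y)=\eta(z)F(\f x,\f y,\xi)+\eta(y)F(\f x,\f z,\xi)$. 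The symmetrised sum therefore reduces to $-4F(x,y,z)+4\eta(x)F(\xi,y,z)$, whence $F(x,y,z)=-\tfrac14\bigl[N(\f x,y,z)+N(\f x,z,y)+\widehat N(\f x,y,z)+\widehat N(\f x,z,y)\bigr]+\eta(x)F(\xi,y,z)$.

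It then remains to match the vertical correction $\eta(x)F(\xi,y,z)$ with the $\eta(x)$-bracket on the right of \eqref{nabf}, that is, to prove $F(\xi,y,z)=\tfrac12\bigl[N(\xi,y,\f z)+\widehat N(\xi,y,\f z)+\eta(z)\widehat N(\xi,\xi,\f y)\bigr]$. For this I would set $x=\xi$ and replace $z$ by $\f z$ in \eqref{enu} and \eqref{enhat}; the terms containing $\f\xi=0$ and $\eta(\f z)=0$ drop, and adding the two expressions yields $N(\xi,y,\f z)+\widehat N(\xi,y,\f z)=2F(\xi,y,z)-2\eta(z)F(\xi,y,\xi)$. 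A short separate evaluation of $\widehat N(\xi,\xi,\f y)$, again invoking $\f\xi=0$, $\eta\circ\f=0$, the last-two symmetry of $F$, and the vanishing $F(\xi,\xi,\xi)=0$ (itself read off from \eqref{F-prop}), gives $\widehat N(\xi,\xi,\f y)=2F(\xi,y,\xi)$, which cancels exactly the unwanted term. Substituting this vertical identity back into the horizontal formula produces \eqref{nabf}.

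The substitutions are lengthy but mechanical; the one genuinely delicate point is the cancellation in the horizontal part, which relies on combining the last-two-slot symmetry with the $\f$-twist relation of \eqref{F-prop}, and uses no metric or connection data beyond the defining algebraic relations \eqref{str1} and the structural identity \eqref{F-prop}.
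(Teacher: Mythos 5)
Your proposal is correct and follows essentially the paper's own route: the paper likewise adds \eqref{enu} and \eqref{enhat} to get its \eqref{sum1}, derives your cancellation identity as its \eqref{sum2}, substitutes $x\to\f x$ via $\f^2=-\Id+\eta\otimes\xi$ to isolate $F(x,y,z)$ up to the $\eta(x)F(\xi,y,z)$ correction, and handles that correction by setting $x=\xi$, $z\to\f z$ and evaluating $\widehat N(\xi,\xi,\f\cdot)=2F(\xi,\xi,\cdot)$. The only difference is the order of the symmetrisation and the $\f$-substitution, which is immaterial.
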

\begin{proof}
Taking the sum of \eqref{enu} and \eqref{enhat}, we obtain
\begin{equation}\label{sum1}
F(\f x,y,z)-F(x,y,\f z)=\frac12\bigl[N(x,y,z)+\widehat
N(x,y,z)\bigr]-\eta(z)F(x,\f y,\xi).
\end{equation}
The  identities  \eqref{F-prop} together with \eqref{str1} imply
\begin{equation}\label{sum2}
F(x,y,\f z)+F(x,z,\f y)=\eta(z)F(x,\f y,\xi)+\eta(y)F(x,\f z,\xi).
\end{equation}
A suitable combination of \eqref{sum1} and \eqref{sum2} yields
\begin{equation}\label{nabff}
F(\f x,y,z)=\frac14\bigl[N(x,y,z)+N(x,z,y)+\widehat
N(x,y,z)+\widehat N(x,z,y)\bigr].
\end{equation}
Applying \eqref{str1}, we obtain from \eqref{nabff}
\begin{multline}\label{nabff1}
F(x,y,z)=\eta(x)F(\xi,y,z)\\-\frac14\bigl[N(\f x,y,z)+N(\f
x,z,y)+\widehat N(\f x,y,z)+\widehat N(\f x,z,y)\bigr].
\end{multline}
Set $x=\xi$ and $z\rightarrow \f z$ into \eqref{sum1} and use \eqref{str1} to get
\begin{equation}\label{nabff2}
F(\xi,y,z)=\frac12\bigl[N(\xi,y,\f z)+\widehat N(\xi,y,\f
z)\bigr]+\eta(z)F(\xi,\xi,y).
\end{equation}
Finally, set $y=\xi$ into \eqref{nabff2} and use the general
identities $N(\xi,\xi)=F(\xi,\xi,\xi)=0$ to obtain
\begin{equation}\label{fff}F(\xi,\xi,z)=\frac12\widehat N(\xi,\xi,\f z).
\end{equation}
Substitute \eqref{fff} into \eqref{nabff2} and the obtained identity insert
into \eqref{nabff1} to get \eqref{nabf}.
\end{proof}

The next result determines  the Sasaki-like spaces by the structure tensors.

\begin{thm}\label{ssss}
Let $\M$ be an almost contact complex Riemannian manifold.
The following conditions are equivalent:
\begin{itemize}
\item[a)] The manifold $\M$ is a Sasaki-like almost contact
complex Riemannian manifold; %
\item[b)] The covariant derivative
$\nabla \f$ satisfies the equality
\begin{equation}\label{defsl}
\begin{array}{l}
(\nabla_x\f)y=-g(x,y)\xi-\eta(y)x+2\eta(x)\eta(y)\xi;
%\\[4pt] \phantom{(\nabla_x\f)y}=g(\f x,\f y)\xi+\eta(y)\f^2 x;
\end{array}
\end{equation}
\item[c)] The Nijenhuis tensors $N$ and $\widehat N$ satisfy the
relations:
\begin{equation}\label{sasnn}
N=0,\qquad  \widehat
N=-4\left(\widetilde{g}-\eta\otimes\eta\right)\otimes\xi.
\end{equation}
\end{itemize}
\end{thm}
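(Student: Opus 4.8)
The plan is to establish the two equivalences a)$\Leftrightarrow$b) and b)$\Leftrightarrow$c) separately. The first is a direct computation with the fundamental tensor $F$ of \eqref{F=nfi}, while the second rests on the master formula \eqref{nabf} of \thmref{gen} together with the expressions \eqref{enu} and \eqref{enhat} of $N$ and $\widehat N$ in terms of $F$. Throughout I would use the symmetry properties \eqref{F-prop}, the algebraic relations \eqref{str1}, the identity $g(x,\xi)=\eta(x)$, and the symmetry $g(\f x,y)=g(x,\f y)$ of the B-metric (equivalently, the symmetry of $\widetilde g$).

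For b)$\Rightarrow$a), and more usefully for reading off a) from b), I would rewrite \eqref{defsl} as the $(0,3)$-tensor $F(x,y,z)=-g(x,y)\eta(z)-g(x,z)\eta(y)+2\eta(x)\eta(y)\eta(z)$ and specialise the arguments to horizontal and vertical vectors; this immediately returns $F(X,Y,Z)=F(\xi,Y,Z)=F(\xi,\xi,Z)=0$ and $F(X,Y,\xi)=-g(X,Y)$, i.e. \eqref{sasaki}--\eqref{sasaki0}. The converse a)$\Rightarrow$b) is the step needing the most care: I would write each argument as $x=X+\eta(x)\xi$ with horizontal part $X:=x-\eta(x)\xi\in H$ (and likewise $Y,Z$), and expand $F(x,y,z)$ multilinearly into its eight horizontal/vertical components. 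The four components $F(X,Y,Z)$, $F(\xi,Y,Z)$, $F(\xi,\xi,Z)$, $F(X,Y,\xi)$ are prescribed by \eqref{sasaki}--\eqref{sasaki0}; the remaining mixed ones must be produced from the hypotheses via \eqref{F-prop} and \eqref{str1}, namely $F(X,\xi,Z)=F(X,Z,\xi)=-g(X,Z)$ and $F(\xi,Y,\xi)=F(\xi,\xi,Y)=0$ by the first symmetry in \eqref{F-prop}, while putting $y=z=\xi$ in the second identity of \eqref{F-prop} and using $\f\xi=0$ forces $F(X,\xi,\xi)=F(\xi,\xi,\xi)=0$. Reassembling, with $g(X,Y)=g(x,y)-\eta(x)\eta(y)$, then yields exactly the expression for $F$ above, i.e. \eqref{defsl}.

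For b)$\Rightarrow$c) I would substitute this explicit $F$ into \eqref{enu} and \eqref{enhat}: the relation $\eta\circ\f=0$ annihilates every term carrying a factor $\eta(\f\cdot)$, the B-metric symmetry $g(\f x,y)=g(x,\f y)=g(y,\f x)$ makes the six terms of $N$ cancel in pairs so that $N=0$, and the same symmetry makes the terms of $\widehat N$ reinforce to $\widehat N(x,y,z)=-4g(\f x,y)\eta(z)$, which by $(\widetilde g-\eta\otimes\eta)(x,y)=g(x,\f y)$ is precisely $-4(\widetilde g-\eta\otimes\eta)\otimes\xi$, giving \eqref{sasnn}. For the converse c)$\Rightarrow$b) I would feed \eqref{sasnn} into \eqref{nabf}: with $N=0$ only the terms $\widehat N(\f x,y,z)$ and $\widehat N(\f x,z,y)$ survive, while the entire $\tfrac12\eta(x)[\dots]$ bracket vanishes because $\widehat N(\xi,y,\f z)$ carries the factor $\eta(\f z)=0$ and $\widehat N(\xi,\xi,\f y)$ carries $\f\xi=0$. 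Evaluating the two surviving terms and substituting $g(\f x,\f y)=-g(x,y)+\eta(x)\eta(y)$ recovers \eqref{defsl}.

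The main obstacle is the bookkeeping in a)$\Rightarrow$b): one must correctly derive the mixed components $F(X,\xi,Z)$, $F(X,\xi,\xi)$, $F(\xi,Y,\xi)$, $F(\xi,\xi,\xi)$ that are \emph{not} directly listed among the Sasaki-like conditions, since these are exactly what is needed to pin down $(\nabla_x\f)y$ on all of $TM$ rather than only on the prescribed vector types. Everything else reduces to routine multilinear expansion and to the symmetric cancellations already isolated in \thmref{gen}.
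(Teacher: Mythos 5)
Your proposal is correct and follows essentially the same route as the paper: the a)$\Leftrightarrow$b) equivalence by translating \eqref{defsl} into the $(0,3)$-tensor $F$ and checking components against \eqref{sasaki}--\eqref{sasaki0} via \eqref{F-prop}, b)$\Rightarrow$c) by substituting into \eqref{enu} and \eqref{enhat}, and c)$\Rightarrow$b) by feeding \eqref{sasnn} into \eqref{nabf} after observing that the $\eta(x)[\dots]$ terms (equivalently $\widehat N(\xi,\cdot)$) vanish. You merely make explicit the horizontal/vertical bookkeeping that the paper dismisses as ``easy to check''.
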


\begin{proof}
It is easy to check using \eqref{Fxieta} and \eqref{F-prop}
that \eqref{defsl} is equivalent to the system of the equations
\eqref{sasaki} and \eqref{sasaki0} which established the equivalence between
a) and b) in view of Proposition~\ref{defs}.

Substitute  \eqref{defsl} consequently into \eqref{enu} and
\eqref{enhat} to get \eqref{sasnn} which gives the implication
$b)\Rightarrow c)$.

Now, suppose \eqref{sasnn} holds. Consequently, we obtain
$\widehat N(\xi,y)=0$. Now, \eqref{defsl} follows with a
substitution of the last equality together with  \eqref{sasnn}
into \eqref{nabf} which completes the proof.
\end{proof}

\begin{cor}
Let $\M$ be a Sasaki-like almost contact complex Riemannian
manifold. Then we have %the next conditions hold
\begin{itemize}
\item[a)] the manifold $\M$ is normal, $N=0$, the fundamental
1-form $\eta$ is closed, $\D\eta=0$, and the integral curves of
$\xi$ are
geodesics, $\n_{\xi}\xi=0$; %
\item[b)]  the 1-forms $\theta$ and $\theta^*$ satisfy the
equalities $ \theta=-2n\,\eta$ and $\theta^*=0$.
\end{itemize}
\end{cor}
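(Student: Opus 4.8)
The plan is to read off both statements from the closed-form expression for $F$ supplied by the Sasaki-like condition. By \thmref{ssss} a Sasaki-like manifold satisfies \eqref{defsl}, and lowering the free index with $g$ (using $g(\xi,z)=\eta(z)$) converts \eqref{defsl} into the single pointwise master identity $F(x,y,z)=-g(x,y)\eta(z)-\eta(y)g(x,z)+2\eta(x)\eta(y)\eta(z)$. Both a) and b) then follow by inserting special arguments into this one formula, so there is no serious obstacle, only careful bookkeeping with the basis in the trace computations for b).

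For a), normality is immediate: \thmref{ssss}c) gives $N=0$, and an almost contact structure is normal exactly when its Nijenhuis tensor vanishes, as recalled after \eqref{concom}. To treat $\n\eta$ and $\n_\xi\xi$ I would use \eqref{Fxieta}, that is $(\n_x\eta)y=F(x,\f y,\xi)$. Feeding $x$, $\f y$, $\xi$ into the master formula and using $\eta\circ\f=0$ collapses all but one term and leaves $(\n_x\eta)y=-g(x,\f y)$. Since $\widetilde g$ is symmetric, $g(\cdot,\f\cdot)$ is symmetric, so $(\n_x\eta)y$ is symmetric in $x,y$ and hence $\D\eta(x,y)=(\n_x\eta)y-(\n_y\eta)x=0$. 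Setting $x=\xi$ in the same computation and using $g(\xi,\f y)=\eta(\f y)=0$ yields $g(\n_\xi\xi,y)=(\n_\xi\eta)y=F(\xi,\f y,\xi)=0$ for all $y$, whence $\n_\xi\xi=0$.

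For b) I would take the horizontal orthonormal basis from the Convention, on which $\eta(e_i)=0$ and $g(e_i,e_i)=\varepsilon_i$. Substituting $(e_i,e_i,z)$ into the master formula kills the two terms carrying $\eta(e_i)$ and leaves $F(e_i,e_i,z)=-\varepsilon_i\eta(z)$, so $\varepsilon_iF(e_i,e_i,z)=-\eta(z)$; summing the $2n$ terms gives $\theta=-2n\,\eta$. For $\theta^*$, substituting $(e_i,\f e_i,z)$ and using $\eta\circ\f=0$ leaves $F(e_i,\f e_i,z)=-g(e_i,\f e_i)\eta(z)$, and the only point needing attention is that $g(e_i,\f e_i)=0$ for every $i$: this holds because $\f e_k=e_{n+k}$ and $\f e_{n+k}=\f^2e_k=-e_k$ are orthogonal to $e_k$ and $e_{n+k}$ respectively in the orthonormal basis. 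Thus every summand vanishes and $\theta^*=0$; alternatively $\theta^*\circ\f=-\theta\circ\f^2=0$ together with $\theta^*(\xi)=0$ gives the same conclusion.

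The only mild obstacle is the vanishing of the traced quantity $\sum_i\varepsilon_i g(e_i,\f e_i)$ in the $\theta^*$ computation: rather than following from a single substitution, it requires the explicit relation $e_{n+k}=\f e_k$ together with $\f^2=-\Id$ on $H$ and the symmetry of $g(\cdot,\f\cdot)$. Everything else is a direct evaluation of the master formula obtained from \eqref{defsl}.
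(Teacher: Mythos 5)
Your proof is correct and follows exactly the route the paper intends: the corollary is stated without proof because everything reduces to the master identity $F(x,y,z)=-g(x,y)\eta(z)-\eta(y)g(x,z)+2\eta(x)\eta(y)\eta(z)$ (equivalently \eqref{defsl1}), together with $N=0$ from Theorem~\ref{ssss} and the facts $\D\eta=0$, $\n_\xi\xi=0$ already derived in the proof of Proposition~\ref{defs}. Your trace computations for $\theta$ and $\theta^*$, including the observation that $g(e_i,\f e_i)=0$ in the adapted orthonormal basis, are accurate.
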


\subsection{Examples}
In this section we construct a number of examples of Sasaki-like
almost contact complex Riemannian manifolds.

\subsubsection{Example~1.} Consider the solvable Lie group $G$ of
dimension $2n+1$
 with a basis of left-invariant vector fields $\{e_0,\dots, e_{2n}\}$
 defined by the commutators
\begin{equation}\label{com}
[e_0,e_1]=e_{n+1},\; \dots,\; [e_0,e_n]=e_{2n},\;
[e_0,e_{n+1}]=-e_1,\; \dots,\; [e_0,e_{2n}]=-e_n.
\end{equation}
Define an invariant almost contact complex Riemannian structure
%(an almost contact B-metric structure)
 on $G$  by
\begin{equation}\label{strEx1}
%\begin{array}{c}
\begin{array}{rl}
&g(e_0,e_0)=g(e_1,e_1)=\dots=g(e_n,e_n)=1\\[4pt]
&g(e_{n+1},e_{n+1})=\dots=g(e_{2n},e_{2n})=-1,
%\end{array}
\\[4pt]
&g(e_i,e_j)=0,\quad
i,j\in\{0,1,\dots,2n\},\; i\neq j,
\\[4pt]
&\xi=e_0, \quad \f  e_1=e_{n+1},\quad  \dots,\quad  \f e_n=e_{2n}.
\end{array}
\end{equation}

Using the Koszul formula \eqref{koszul} we check that
\eqref{sasaki} and  \eqref{sasaki0} %and \eqref{sasaki1}
are
fulfilled, i.e. this is a Sasaki-like almost contact complex
Riemannian structure.

Let $e^0=\eta$, $e^1$, $\dots$, $e^{2n}$ be the corresponding dual
1-forms, $e^i(e_j)=\delta^i_j$. From \eqref{com} and the formula
for an arbitrary 1-form $\alpha$
\[
\D
\alpha(A,B)=A\alpha(B)-B\alpha(A)-\alpha([A,B]),
\]
it follows that the structure equations of the group are
\begin{equation}\label{comstr}
\begin{array}{llll}
 \D e^0=\D\eta=0,\; &\D e^1=e^{0}\wedge e^{n+1},\;&\dots,\; &\D
e^n=e^{0}\wedge e^{2n},\\[4pt]
&\D e^{n+1}=-e^{0}\wedge e^{1}, &\dots, &\D e^{2n}=- e^{0}\wedge
e^{n}
\end{array}
\end{equation}
and the Sasaki-like almost contact complex Riemannian structure
has the form
\begin{equation}\label{sas}
g=\sum_{i=0}^{2n}\varepsilon_i\left(e^i\right)^2,%
\qquad \f e^1= e^{n+1},\ \dots,\ \f e^n= e^{2n}.
\end{equation}

The group $G$ is the following  \textit{rank-1 solvable
extension of the Abelian group $\R^{2n}$}
\begin{equation}\label{ext1}
\begin{array}{lll}
e^0=\D t, \; &e^1=\cos t\ \D x^1+\sin t\ \D x^{n+1},\;
&e^{n+1}=-\sin
t\ \D x^1+\cos t\ \D x^{n+1},\\[4pt]%\nonumber %
\dots, \; &e^n=\cos t\ \D x^n+\sin t\ \D x^{2n},\; &e^{2n}=-\sin
t\ \D x^n+\cos t\ \D x^{2n}.
\end{array}
\end{equation}
Clearly, the 1-forms defined in \eqref{ext1} satisfy
\eqref{comstr} and the Sasaki-like almost contact complex Riemannian metric has
the form
\begin{equation}\label{sasmetric}
g=\D t^2+\cos{2t}\left(\sum_{i=1}^{2n}\varepsilon_i\left(\D
x^i\right)^2\right)-\sin{2t}\left(-2\sum_{i=1}^n\D x^i\D
x^{n+i}\right).
\end{equation}

It is known that the solvable Lie group $G$ admits a lattice
$\Gamma$ such that the quotient space $G/\Gamma$ is compact (c.f.
\cite[Chapter~3]{OT}). The invariant Sasaki-like almost contact
complex Riemannian structure $(\f, \xi, \eta, g)$  on $G$ descends
to $G/\Gamma$ which supplies a compact Sasaki-like almost contact
complex Riemannian manifold in any dimension.

It follows from \eqref{com}, \eqref{sas}, \eqref{ext1} and  \eqref{sasmetric}
that the distribution
$H=%\ker(\eta)=
\Span\{e_1,\dots,e_{2n}\}$ is integrable and the corresponding
integral submanifold can be considered as the holomorphic complex
Riemannian flat space $\R^{2n}=\Span\{\D x^1,\dots,\D x^{2n}\}$
with the  following holomorphic complex Riemannian  structure  %
\[
J\D x^1=\D x^{n+1},\; \dots,\;
J\D x^n=\D x^{2n};\quad h=\sum_{i=1}^{2n}\varepsilon_i(\D x^i)^2,
\quad \widetilde h =-2\sum_{i=1}^n\D x^i\D x^{n+i}.
\]

\subsubsection{\protect{$S^1$}-solvable extension}
Inspired by Example~1 we proposed the following more general
construction. Let $(N^{2n},J,h,\widetilde{h})$ be a
$2n$-dimen\-sion\-al holomorphic complex Riemannian manifold, i.e.
the almost complex structure $J$ acts as an anti-isometry on the
neutral metric $h$, $h(JX,JY)=-h(X,Y)$ and it is parallel with
respect to the Levi-Civita connection of $h$. In particular, the
almost complex structure $J$ is integrable. The associated neutral
metric $\widetilde{h}$ is defined by $\widetilde{h}(X,Y)=h(JX,Y)$
and it is also parallel with respect to the Levi-Civita connection
of $h$.

Consider the product manifold $M^{2n+1}=\mathbb R^+\times N^{2n}$.
Let $\D t$ be the coordinate 1-form on $\mathbb R^+$ and define an
almost contact complex Riemannian structure on $M^{2n+1}$ as
follows
\begin{equation}\label{strsas}
\eta=\D t, \quad \varphi |_H%{|_{Ker(\eta)}}
=J, \quad \eta\circ\varphi =0,\quad g=\D t^2+\cos{2t}\ h-\sin{2t}\
\widetilde{h}.
\end{equation}

\begin{thm}\label{ext}
Let $(N^{2n},J,h,\widetilde{h})$ be a $2n$-dimensional holomorphic
complex Rie\-mann\-ian manifold. Then the product manifold
$M^{2n+1}=\mathbb R^+\times N^{2n}$ equipped with the almost
contact complex Riemannian structure defined in \eqref{strsas} is
a Sasaki-like almost contact complex Riemannian manifold.

If $N^{2n}$ is compact then $M^{2n+1}=S^1\times N^{2n}$ with the
structure \eqref{strsas} is a compact Sasaki-like almost contact
complex Riemannian manifold.
\end{thm}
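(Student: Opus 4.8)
The plan is to reduce the whole statement to part b) of \thmref{ssss}. First I would check that \eqref{strsas} really defines an almost contact complex Riemannian structure: with $\f\xi=0$ and $\f|_H=J$ the relations \eqref{str1} reduce to $J^2=-\Id$, while $h(JX,JY)=-h(X,Y)$ gives $g(\f x,\f y)=-g(x,y)+\eta(x)\eta(y)$; moreover each fibre form $g_t:=\cos 2t\,h-\sin 2t\,\widetilde h$ is anti-invariant under $J$, hence of neutral signature, so $g=\D t^2+g_t$ has signature $(n+1,n)$. It then remains to compute $\n\f$ and to verify \eqref{defsl}, after which \thmref{ssss} makes $M^{2n+1}$ Sasaki-like automatically (equivalently, one reads off the conditions \eqref{sasaki} and \eqref{sasaki0} directly from the formula for $\n\f$).

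The core of the proof is the Levi-Civita connection $\n$ of $g=\D t^2+g_t$, whose fibre metric $g_t$ varies with $t$. The key device is to write $g_t(X,Y)=h(P_tX,Y)$ with
\[
P_t=\cos 2t\,\Id-\sin 2t\,J=e^{-2tJ},
\]
which is invertible. Since $J$ is $\n^h$-parallel and $h$-self-adjoint, so is $P_t$ for each fixed $t$, whence $\n^h g_t=0$; by uniqueness of the Levi-Civita connection this forces $\n^{g_t}=\n^h$ for every $t$, so $J$ stays parallel along the fibres. Feeding $\xi=\ddt$ into the Koszul formula \eqref{koszul}, with vector fields on $N$ taken $t$-independent so that $[\xi,X]=0$, yields $\n_\xi\xi=0$, identifies the horizontal part of $\n_XY$ with $\n^h_XY$, and produces the mixed term $2g(\n_\xi X,Z)=\bigl(\ddt g_t\bigr)(X,Z)$. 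The crucial simplification is that, because $P_t=e^{-2tJ}$ commutes with $J$, all the trigonometric dependence is absorbed by the single identity $P_t^{-1}JP_t=J$, and the mixed term collapses to
\[
\n_X\xi=\n_\xi X=-\f X.
\]

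With these ingredients I would evaluate $(\n_x\f)y=\n_x(\f y)-\f\,\n_xy$ in the four cases $x,y\in\{X,\xi\}$. For horizontal $x,y$ the horizontal components cancel exactly because $\n^h J=0$, so only the vertical part survives; using $h(X,JY)=\widetilde h(X,Y)$ and $\widetilde h(X,JY)=-h(X,Y)$ one finds $g(\n_X(\f Y),\xi)=-\tfrac12\bigl(\ddt g_t\bigr)(X,\f Y)=-g(X,Y)$, hence $(\n_X\f)Y=-g(X,Y)\xi$. The three remaining cases follow immediately from $\n_X\xi=\n_\xi X=-\f X$, $\f\xi=0$ and $\f^2=-\Id+\eta\otimes\xi$, giving $(\n_\xi\f)Y=0$, $(\n_X\f)\xi=-X$ and $(\n_\xi\f)\xi=0$. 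These four values are precisely \eqref{defsl}, so \thmref{ssss} certifies the Sasaki-like property.

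For the compact case, the coefficients $\cos 2t$ and $\sin 2t$ are $\pi$-periodic while $\xi,\eta,\f$ are $t$-independent, so the entire structure is invariant under $t\mapsto t+\pi$ and descends to $S^1\times N^{2n}$ with $S^1=\R/\pi\mathbb{Z}$; being a local condition, the Sasaki-like property passes to this quotient, which is compact whenever $N^{2n}$ is. I expect the only genuine obstacle to be the connection computation itself: the explicit $t$-dependence of $g_t$ threatens to make $\n$ unmanageable, and the argument really hinges on spotting the exponential form $P_t=e^{-2tJ}$, which simultaneously forces $\n^{g_t}=\n^h$ and reduces the mixed connection term to the clean relation $\n_X\xi=-\f X$.
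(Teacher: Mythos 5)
Your proof is correct and follows the same route the paper indicates — the Koszul formula \eqref{koszul} applied to $g=\D t^2+g_t$ together with $\nabla^h J=0$ — merely supplying the details the paper compresses into ``it is easy to check'' and verifying the equivalent condition \eqref{defsl} instead of \eqref{sasaki}--\eqref{sasaki0} directly. The observation $P_t=e^{-2tJ}$, which gives $\nabla^{g_t}=\nabla^h$ and collapses the mixed term to $\nabla_X\xi=-\f X$, is a clean way to organize exactly the computation the authors have in mind.
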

\begin{proof}
It is easy to check using \eqref{koszul}, \eqref{strsas} and the
fact that the complex structure $J$ is parallel with respect to
the Levi-Civita connection of $h$ that the structure
defined in \eqref{strsas} satisfies \eqref{sasaki} and
\eqref{sasaki0} and thus $\M$  is a  Sasaki-like almost contact
complex Riemannian manifold.

Now, suppose $N^{2n}$ is a compact holomorphic complex Riemannian
manifold. The equations \eqref{strsas} imply that the metric $g$
is periodic on $\R$ and therefore it descends to the compact
manifold $M^{2n+1}=S^1\times N^{2n}$. Thus we obtain a compact
Sasaki-like almost contact complex Riemannian manifold.
\end{proof}
We call the Sasaki-like almost contact complex Riemannian manifold
constructed in Theorem~\ref{ext} from a holomorphic complex
Riemannian manifold  \emph{an $S^1$-solvable extension of a holomorphic
complex Riemannian manifold}.

\subsubsection{Example~2.} Let us consider the Lie group $G^5$ of
dimension $5$
 with a basis of left-invariant vector fields $\{e_0,\dots, e_{4}\}$
 defined by the commutators
\[
\begin{array}{ll}
[e_0,e_1] = \lm e_2 + e_3 + \mu e_4,\quad &[e_0,e_2] = - \lm e_1 -
\mu e_3 + e_4,\\[4pt]
[e_0,e_3] = - e_1  - \mu e_2 + \lm e_4,\quad &[e_0,e_4] = \mu e_1
- e_2 - \lm e_3,\qquad \lm,\mu\in\R.
\end{array}
\]
Let $G^5$ be equipped with an invariant almost contact complex
Riemannian structure as in \eqref{strEx1} for $n=2$. We calculate
using \eqref{koszul} that the non-zero connection 1-forms of the
Levi-Civita connection are
\[
\begin{array}{c}
\begin{array}{llll}
\n_{e_0} e_1 = \lm e_2 + \mu e_4,\; & \n_{e_1}e_0 = - e_3,\;
& \n_{e_0} e_2 = - \lm e_1 - \mu e_3, \; & \n_{e_2} e_0 = - e_4,\\[4pt]
\n_{e_0} e_3 = - \mu e_2 + \lm e_4,\; & \n_{e_3} e_0 = e_1, \; &
\n_{e_0} e_4 = \mu e_1 - \lm e_3, \; & \n_{e_4}e_0 = e_2,
\end{array}
\\
\begin{array}{c}\\[-8pt]
\n_{e_1}e_3 = \n_{e_2} e_4 = \n_{e_3} e_1 = \n_{e_4}e_2 = - e_0.
\end{array}
\end{array}
\]

Similarly as in Example~1 we verify that the constructed manifold
$(G^5,\f,\xi,\eta,g)$ is a Sasaki-like almost contact complex
Riemannian manifold.

{ Take $\mu=0$ and $\lambda\not=0$. Then the structure equations of the group become
\begin{equation}\label{comstr2}
\begin{array}{llll}
 \D e^0=\D\eta=0,\; &\D e^1=e^{0}\wedge e^3 +\lambda e^0\wedge e^2, &\D
e^2=e^{0}\wedge e^4-\lambda e^0\wedge e^1,\\[4pt]
&\D e^3=-e^{0}\wedge e^{1}+\lambda e^0\wedge e^4, &\D e^4=-
e^{0}\wedge e^2 -\lambda e^0\wedge e^3.
\end{array}
\end{equation}

A basis of 1-forms satisfying \eqref{comstr2} is given by $e^0=\D
t$ and
\[
\begin{split}
e^1=\ &\cos{(1-\lambda)t}\ \D x^1-\cos{(1+\lambda)t}\ \D x^2
+\sin{(1-\lambda)t}\ \D x^3-\sin{(1+\lambda)t}\ \D x^4,\\[4pt]
e^2=\ &\sin{(1-\lambda)t}\ \D x^1+\sin{(1+\lambda)t}\ \D x^2
-\cos{(1-\lambda)t}\ \D x^3-\cos{(1+\lambda)t}\ \D x^4,\\[4pt]
e^3=\ &-\sin{(1-\lambda)t}\ \D x^1+\sin{(1+\lambda)t}\ \D x^2
+\cos{(1-\lambda)t}\ \D x^3-\cos{(1+\lambda)t}\ \D x^4,\\[4pt]
e^4=\ &\cos{(1-\lambda)t}\ \D x^1+\cos{(1+\lambda)t}\ \D x^2
+\sin{(1-\lambda)t}\ \D x^3+\sin{(1+\lambda)t}\ \D x^4.
\end{split}
\]
Then the Sasaki-like metric is of the form
\begin{equation}\label{m2}
\begin{split}
g=\ & \D t^2%
-4\cos{2t}\left(\D x^1\D x^2-\D x^3\D x^4\right)-4\sin{2t}\left(\D
x^1\D x^4+\D x^2\D x^3\right).
\end{split}
\end{equation}
From \eqref{comstr2} it follows that the distribution
$H=%\ker(\eta)=
\Span\{e_1,\dots,e_4\}$ is integrable and the corresponding
integral submanifold  can be considered as
the holomorphic complex Riemannian
flat space $\R^{4}=\Span\{\D x^1,\dots,\D x^{4}\}$ with the
holomorphic complex Riemannian  structure  given by
\begin{equation*}
J\D x^1=\D x^3, \quad J\D x^2=\D x^4;\quad h=-4(\D x^1\D x^2-\D
x^3\D x^4), \quad \widetilde h =4(\D x^1\D x^4+\D x^2\D x^3)
\end{equation*}
and the Sasaki-like metric \eqref{m2} takes the form
$$g=\D t^2+\cos{2t}\
h-\sin{2t}\ \widetilde{h}.$$

\section{Curvature properties }%of the Sasaki-like almost contact complex Riemannian manifolds}

Let $\M$ be an almost contact complex Riemannian manifold. The
curva\-ture tensor of type $(1,3)$ is defined by
$R=[\nabla,\nabla]-\nabla_{[\ ,\ ]}$.  We denote the curvature
tensor of type $(0,4)$ by the same letter,
$R(x,y,z,u)=g(R(x,y)z,u)$. The Ricci tensor $Ric$,  the scalar
curvature $Scal$ and the *-scalar curvature $Scal^*$ are the usual
traces of the curvature,
$Ric(x,y)=\sum_{i=0}^{2n}\varepsilon_iR(e_i,x,y,e_i)$,
$Scal=\sum_{i=0}^{2n}\varepsilon_iRic(e_i,e_i)$,
$Scal^*=\sum_{i=0}^{2n}\varepsilon_i Ric(e_i,\f e_i)$.

\begin{prop}\label{vercurv}
On a Sasaki-like almost contact complex Riemannian manifold $\M$
the next formula holds
\begin{equation}\label{curf}
\begin{array}{l}
R(x,y,\f z,u)-R(x,y,z,\f u)\\[4pt]
=\left[g(y,z)-2\eta(y)\eta(z)\right]g(x,\f u)
+\left[g(y,u)-2\eta(y)\eta(u)\right]g(x,\f z)\\[4pt]
-\left[g(x,z)-2\eta(x)\eta(z)\right]g(y,\f
u)-\left[g(x,u)-2\eta(x)\eta(u)\right]g(y,\f z).
\end{array}
\end{equation}
In particular, we have
\begin{gather}
\label{cur}
 R(x,y)\xi=\eta(y)x-\eta(x)y, \quad  [X,\xi]\in H, \quad
\nabla_{\xi}X=-\f X-[X,\xi] \in H; \\[4pt]
R(\xi,X)\xi=-X, \qquad Ric(y,\xi)=2n\ \eta(y),\qquad
Ric(\xi,\xi)=2n.\label{ricxi}
\end{gather}
\end{prop}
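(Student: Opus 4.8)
The plan is to base everything on the explicit covariant-derivative formula \eqref{defsl} furnished by \thmref{ssss}, namely $(\n_x\f)y=-g(x,y)\xi-\eta(y)x+2\eta(x)\eta(y)\xi$. First I would extract the two auxiliary identities that drive the whole computation. Using $\f\xi=0$ one gets $\f(\n_x\xi)=-(\n_x\f)\xi=x-\eta(x)\xi$; applying $\f$ once more, together with $\f^2=-\Id+\eta\otimes\xi$ and the fact that $\eta(\n_x\xi)=g(\n_x\xi,\xi)=\tfrac12 x\,g(\xi,\xi)=0$, yields the key relation $\n_x\xi=-\f x$. Dualizing gives $(\n_x\eta)y=g(\n_x\xi,y)=-g(\f x,y)=-g(x,\f y)$, where I use that $\f$ is $g$-symmetric, a direct consequence of the compatibility $g(\f x,\f y)=-g(x,y)+\eta(x)\eta(y)$ together with \eqref{str1}.

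For the main identity \eqref{curf} I would rewrite its left-hand side tensorially. Since $\f$ is $g$-symmetric, $R(x,y,z,\f u)=g(\f R(x,y)z,u)$, so $R(x,y,\f z,u)-R(x,y,z,\f u)=g\bigl(R(x,y)\f z-\f R(x,y)z,\,u\bigr)$, i.e. the left-hand side is the image under $u$ of the operator $R(x,y)\cdot\f=[R(x,y),\f]$. This operator is computed by the Ricci identity $R(x,y)\cdot\f=\n^2_{x,y}\f-\n^2_{y,x}\f$, and its right-hand side is obtained by differentiating \eqref{defsl} once more. Writing $P:=\n\f$ and applying the product rule, all correction terms containing $\n_x$ or $\n_y$ of the arguments cancel by tensoriality, and after substituting $\n\xi=-\f$ and $\n\eta=-g(\cdot,\f\cdot)$ one is left with a fully tensorial expression for $(\n_x P)(y,z)$; antisymmetrizing in $x,y$ and pairing with $u$ should produce exactly the four bracketed terms of \eqref{curf}. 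This bookkeeping---tracking the many $\xi$- and $\eta$-valued contributions and using $g(\f x,y)=g(x,\f y)$ to collapse the $\xi$-terms---is the one genuinely laborious step, and it is where I expect the main difficulty to lie.

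The particular formulas then follow quickly. For $R(x,y)\xi$ I would substitute $\n_x\xi=-\f x$ into $R(x,y)\xi=\n_x\n_y\xi-\n_y\n_x\xi-\n_{[x,y]}\xi$; torsion-freeness collapses this to $-(\n_x\f)y+(\n_y\f)x$, and \eqref{defsl} gives $(\n_x\f)y-(\n_y\f)x=-\eta(y)x+\eta(x)y$, whence $R(x,y)\xi=\eta(y)x-\eta(x)y$. The relation $[X,\xi]\in H$ follows from $\D\eta=0$ (established for Sasaki-like spaces in the Corollary to \thmref{ssss}) via $\eta([X,\xi])=-\D\eta(X,\xi)=0$, and then $\n_\xi X=\n_X\xi-[X,\xi]=-\f X-[X,\xi]\in H$. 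Setting $x=\xi$, $y=X$ in the formula for $R(x,y)\xi$ gives $R(\xi,X)\xi=-X$. Finally, $Ric(y,\xi)=\sum_{i=0}^{2n}\varepsilon_i g(R(e_i,y)\xi,e_i)$; inserting $R(e_i,y)\xi=\eta(y)e_i-\eta(e_i)y$ and using $\sum_i\varepsilon_i^2=2n+1$ together with $\eta(e_i)=0$ for $i\ge1$ yields $Ric(y,\xi)=(2n+1)\eta(y)-\eta(y)=2n\,\eta(y)$, and $Ric(\xi,\xi)=2n$ is the special case $y=\xi$.
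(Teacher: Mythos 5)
Your proof is correct and follows essentially the same route as the paper: the identity \eqref{curf} is obtained from the Ricci identity for $\f$ by differentiating \eqref{defsl} once more and substituting $\n_x\xi=-\f x$, i.e.\ \eqref{sasaki1}, exactly the "straightforward calculation" the paper leaves implicit. The only immaterial deviation is that you derive $R(x,y)\xi=\eta(y)x-\eta(x)y$ directly from $\n_x\xi=-\f x$ and \eqref{defsl}, whereas the paper reads it off by setting $z=\xi$ in \eqref{curf}.
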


\begin{proof}
The Ricci identity for $\f$ reads
\[
R(x,y,\f z,u)-R(x,y,z,\f
u)=g\Bigl(\left(\n_x\n_y\f\right)z,u\Bigr)-g\Bigl(\left(\n_y\n_x\f\right)z,u\Bigr).
\]
Applying \eqref{defsl} to the above equality and using
\eqref{sasaki1} we obtain \eqref{curf} by  straightforward
calculations. Set $z=\xi$ into \eqref{curf} and using \eqref{str1}
we get the first equality in \eqref{cur}. The  rest  follows from
\eqref{sasaki1} and the condition $\D \eta=0$. The equalities
\eqref{ricxi} follow directly from the first equality in
\eqref{cur}.
\end{proof}

\subsection{The horizontal curvature}
From $\D\eta=0$ it follows locally $\eta=\D x$, $H$ is integrable
and the manifold is locally the product
$M^{2n+1}=N^{2n}\times\mathbb R$ with $TN^{2n}=H$. The submanifold
$(N^{2n},J=\f|_H,h=g|_H)$ is a holomorphic complex
Riemannian manifold.
%(i.e. a K\"ahler-Norden manifold). %
Indeed, we obtain from \eqref{sasaki} that
$h(\nabla^h_X J)Y,Z)=F(X,Y,Z)=0$, where
$\nabla^h$ is the Levi-Civita connection of $h$.

We may consider $N^{2n}$ as a hypersurface of $M$ with the unit normal
$\xi=\frac{\partial}{\partial x}$. The equality \eqref{sasaki1}
yields
\[
g(\n_X\xi,Y)=-g(\n_XY,\xi)=-g(\f X,Y)=-\widetilde{g}|_{H}(X,Y),
\quad \n_{\xi}\xi=0.
\]
This means that the second fundamental form is equal to
$\widetilde{g}|_{H}=\widetilde h$. The Gauss equation (see e.g.
\cite[Chapter VII, Proposition~4.1]{KN}) yields
\begin{equation}\label{gaus}
R(X,Y,Z,U)=R^h(X,Y,Z,U)+g(\f X,Z)g(\f Y,U)-g(\f Y,Z)g(\f
X,U),
\end{equation}
where $R^h$ is the curvature tensor of the holomorphic complex
Riemannian manifold $(N^{2n},J,h)$.

For the horizontal Ricci tensor we obtain from \eqref{gaus} and \eqref{ricxi} that
\begin{multline}\label{ric}
Ric(Y,Z)=\sum_{i=1}^{2n}\varepsilon_iR(e_i,Y,Z,e_i)+R(\xi,Y,Z,\xi)\\
=Ric^h(Y,Z)+g(\f Y,\f Z)+g(Y,Z)=Ric^h(Y,Z),
\end{multline}
where $Ric^h$ is the Ricci tensor of $h=g|_{H}$.

It follows from \propref{vercurv}  that the curvature tensor in
the direction of $\xi$ on a Sasaki-like almost contact complex
Riemannian manifold is completely determined by
$\eta,\f,g,\widetilde g$. Indeed, using the properties of the
Riemannian curvature, we derive from \eqref{cur}
$$R(x,y,\xi,z)=R(\xi,z,x,y)=\eta(y)g(x,z)-\eta(x)g(y,z).$$
Now,
the equation \eqref{gaus} implies that the Riemannian curvature of
a Sasaki-like almost contact complex Riemannian manifold is
completely determined by the curvature of the underlying
holomorphic complex Riemannian manifold $(N^{2n}, TN^{2n}=H,J,h)$.

\subsection{Example~3: \protect{$S^1$}-solvable extension of the h-sphere}
The next example
illustrates Theorem~\ref{ext}. Consider $\R^{2n+2}$, $n>2$, as a
flat holomorphic complex Riemannian manifold, i.e.  $\R^{2n+2}$ is
equipped with the canonical complex structure $J'$ and the
canonical Norden metrics $h'$ and $\widetilde h'$  defined by
\[
\begin{aligned}
&h'(x',y')=\sum_{i=1}^{n+1} \left(x^i
y^i-x^{n+i+1}y^{n+i+1}\right), \quad \widetilde h'
(x',y')=-\sum_{i=1}^{n+1}\left(x^i
y^{n+i+1}+x^{n+i+1}y^i\right)%\\[4pt]
%&\widetilde h (x',y')=-\sum_{i=1}^{n+1}(x^i
%y^{n+i+1}+x^{n+i+1}y^i)
\end{aligned}
\]

for the vectors $x' = (x^1, \dots, x^{2n+2})$ and $y' = (y^1,
\dots, y^{2n+2})$ %with respect to the basis \{e_1
in $\R^{2n+2}$. Identifying the point $z' = (z^1, \dots,
z^{2n+2})$ in $\R^{2n+2}$ with the position vector $Z'$, we
consider the  complex hypersurface $S_h^{2n}(z'_0; a, b)$
defined by the equations
\[
h'(z'-z'_0, z'-z'_0) = a,\quad \widetilde h' (z'-z'_0, z'-z'_0) =
b,
\]
where $(0, 0)\allowbreak\neq\allowbreak (a, b) \in\R^2$. The
co-dimension two submanifold $S_h^{2n}(z'_0; a,b)$ is
$J'$-invariant and the restriction of $h'$ on $S_h^{2n}(z'_0;
a,b)$ has rank $2n$ due to the condition $(0,
0)\allowbreak\neq\allowbreak (a, b)$. The holomorphic complex
Riemannian structure on $\R^{2n+2}$ inherits a holomorphic complex
Riemannian structure $\bigl(J'|_{{S_h^{2n}}},
h'|_{S_h^{2n}}\bigr)$ on the complex hypersurface $S_h^{2n}(z'_0;
a,b)$. The holomorphic complex Riemannian manifold
$\bigl(S_h^{2n}(z'_0; a,b),\allowbreak{}
J'|_{S_h^{2n}},\allowbreak{} h'|_{S_h^{2n}}\bigr)$ is sometimes
called an \emph{h-sphere} with center $z'_0$ and a pair of
parameters $(a, b)$. The h-sphere $S_h^{2n}(z'_0; 1, 0)$ is the
sphere of Kotel'nikov-Study \cite{v}. The curvature of an h-sphere
is given by the formula \cite{GGM0}
\begin{equation}\label{Rnu}
R'|_{S_h^{2n}} = \frac{1}{a^2+b^2} \bigl\{a(\pi_1 - \pi_2) - b
\pi_3\bigr\},
\end{equation}
where $\pi_1 = \frac{1}{2}h'|_{S_h^{2n}}\owedge h'|_{S_h^{2n}}$,
$\pi_2 = \frac{1}{2}\widetilde{h}'|_{S_h^{2n}}\owedge
\widetilde{h}'|_{S_h^{2n}}$, $\pi_3 =-
h'|_{S_h^{2n}}\owedge\widetilde{h}'|_{S_h^{2n}}$ and $\owedge$
stands for the Kulkarni-Nomizu product of two (0,2)-tensors; for
example,
\[
\begin{aligned}
h'\owedge\widetilde{h}'(X,Y,Z,U)&=h'(Y,Z) \widetilde h'(X,U) -
h'(X,Z)\widetilde h'(Y,U)\\[4pt] &+\widetilde h'(Y,Z) h'(X,U) - \widetilde
h'(X,Z) h'(Y,U).
\end{aligned}
\]
Consequently, we have
\begin{equation}\label{Ric-h-sph}
Ric'|_{S_h^{2n}}=\frac{2(n-1)}{a^2+b^2}\bigl(a
h'|_{S_h^{2n}}+b\widetilde{h}'|_{S_h^{2n}}\bigr), \qquad
Scal'|_{S_h^{2n}}=\frac{4n(n-1)a}{a^2+b^2}.
\end{equation}

The  product manifold $M^{2n+1}=\mathbb R^+\times S_h^{2n}(z'_0;
a,b)$  equipped with the following  almost contact complex
Riemannian structure
\begin{equation*}
\eta=\D t, \quad \varphi|_H =J'|_{S_h^{2n}}, \quad
\eta\circ\varphi =0,\quad g=\D t^2+\cos{2t}\
h'|_{S_h^{2n}}-\sin{2t}\ \widetilde h'|_{S_h^{2n}}
\end{equation*}
is a Sasaki-like almost contact complex Riemannian manifold according to \thmref{ext}.

The horizontal metrics on $M^{2n+1}=\mathbb R^+\times S_h^{2n}(z'_0;
a,b)$  are
\begin{equation}\label{hh}
\begin{array}{l}
h=g|_H=\cos{2t}\ h'|_{S_h^{2n}}-\sin{2t}\ \widetilde
h'|_{S_h^{2n}},\\[4pt]
\widetilde h=\widetilde g|_H=\sin{2t}\ h'|_{S_h^{2n}}+\cos{2t}\
\widetilde h'|_{S_h^{2n}}.
\end{array}
\end{equation}
The Levi-Civita connection $\nabla'$ of the metric
$h'|_{S_h^{2n}}$ coincides with the Levi-Civita connection of
$\widetilde{h}'|_{S_h^{2n}}$ since $\nabla' J'=0$. Using this
fact,  the Koszul formula \eqref{koszul} together with \eqref{hh}
gives for the Levi-Civita connection $\nabla^h$ of $h$ the
expression
\[
h(\nabla^h_XY,Z)= \cos{2t}\
h'|_{S_h^{2n}}\left(\nabla'_XY,Z\right)-\sin{2t}\
h'|_{S_h^{2n}}\left(\nabla'_XY,JZ\right)
=h\left(\nabla'_XY,Z\right),
\]
which implies $\nabla^h_XY=\nabla'_XY$. The latter equality
together with \eqref{hh} yields for the curvature of $h$ the
formula $R^h=\cos{2t}\ R'|_{S_h^{2n}}-\sin{2t}\
\widetilde{R}'|_{S_h^{2n}}$, where
$\widetilde{R}'|_{S_h^{2n}}:=J'R'|_{S_h^{2n}}$. The above equality
together with \eqref{Rnu} implies
\begin{equation}\label{rrr}
\begin{split}
R^h&=\frac{1}{a^2+b^2}\bigl\{\cos{2t}[a(\pi_1-\pi_2)-b\pi_3]
-\sin{2t}[-a\pi_3-b(\pi_1-\pi_2)]\bigr\}\\[4pt]
&=\frac1{a^2+b^2}\bigl\{(a\cos{2t}+b\sin{2t})(\pi_1-\pi_2)
-(b\cos{2t}-a\sin{2t})\pi_3\bigr\}.
\end{split}
\end{equation}

We obtain from \eqref{gaus}, \eqref{hh} and \eqref{rrr} that the
horizontal curvature $R|_H$ of the Sasaki-like almost contact
complex Riemannian manifold $M^{2n+1}=\mathbb R^+\times
S_h^{2n}(z'_0; a,b)$ is given by the formula
\[
\begin{split}
R|_H&=R^h-(\sin{2t})^2\pi_1-(\cos{2t})^2\pi_2+\sin{2t}\cos{2t}\pi_3\\[4pt]
&=\left\{\frac1{a^2+b^2}(a\cos{2t}+b\sin{2t})-(\sin{2t})^2\right\}\pi_1\\[4pt]
&-\left\{\frac1{a^2+b^2}(a\cos{2t}+b\sin{2t})+(\cos{2t})^2\right\}\pi_2\\[4pt]
&-\left\{\frac1{a^2+b^2}(b\cos{2t}-a\sin{2t})-\sin{2t}\cos{2t}\right\}\pi_3.
\end{split}
\]
For the horizontal Ricci tensor we obtain from \eqref{ric},
\eqref{Ric-h-sph} and \eqref{hh} the formula
\[
\begin{split}
Ric|_H=Ric^h&=\frac{2(n-1)}{a^2+b^2}(a
h'|_{S_h^{2n}}+b\widetilde{h}'|_{S_h^{2n}})\\[4pt]
&=\frac{2(n-1)}{a^2+b^2}\Big[(a\cos{2t}-b\sin{2t})h+(b\cos{2t}+a\sin{2t})\widetilde
h\Big].
\end{split}
\]

\section{Contact conformal (homothetic) transformations}% of the Sasaki-like spaces}

In this section we investigate when the Sasaki-like condition is
preserved under contact conformal transformations. We recall that
a general contact conformal transformation of an almost contact
complex Riemannian manifold $\M$ is defined by
\cite{Man4,ManGri1,ManGri2}
\begin{equation}\label{cct}
\begin{aligned}
\overline{\eta}&=e^w\eta,\quad \overline{\xi}=e^{-w}\xi,\\[4pt]
\overline{g}(x,y)& = e^{2u}\cos{2v}\ g(x,y)+e^{2u}\sin{2v}\ g(x,\f
y)+(e^{2w}-e^{2u}\cos{2v})\eta(x)\eta(y),
\end{aligned}
\end{equation}
where $u$, $v$, $w$ are smooth functions.

If the functions $u$, $v$, $w$ are constant we have a
\emph{contact homothetic transformation}.

The tensors $\overline F$ and $F$ are connected by \cite{Man4}, see also \cite[(22)]{MI},
\begin{equation}\label{ff}
\begin{aligned}
    2\overline{F}&(x,y,z)=2e^{2u}\cos{2v} F(x,y,z)
    +2e^{2w}\eta(x)\left[\eta(y)\D w(\f z)+\eta(z)\D w(\f
    y)\right]\\[4pt]
    &
    +e^{2u}\sin{2v} \left[F(\f y,z,x)-F(y,\f z,x)+F(x,\f y,\xi)\eta(z)\right]\\[4pt]
    &+e^{2u}\sin{2v}
    \left[F(\f z,y,x)-F(z,\f y,x)+F(x,\f
    z,\xi)\eta(y)\right]\\[4pt]
    &+(e^{2w}-e^{2u}\cos{2v}\left[F(x,y,\xi)+F(\f y,\f
    x,\xi)\right]\eta(z)\\[4pt]
    &+(e^{2w}-e^{2u}\cos{2v})
    \left[F(x,z,\xi)+F(\f z,\f x,\xi)\right]\eta(y)\\[4pt]
    &+(e^{2w}-e^{2u}\cos{2v})
    \left[F(y,z,\xi)+F(\f z,\f y,\xi)\right]\eta(x)\\[4pt]
    &+(e^{2w}-e^{2u}\cos{2v})\left[F(z,y,\xi)+F(\f y,\f z,\xi)\right]\eta(x)
\\[4pt]
    &-2e^{2u}
    \cos{2v}\left[\D u(\f z)+\D v(z)\right]
    -2e^{2u}\sin{2v}\left[\D u(z)-\D v(\f z)\right]g(\f x,\f y)
\\[4pt]
    &-2e^{2u}
    \cos{2v}\left[\D u(\f y)+\D v(y)\right]
    -2e^{2u}\sin{2v}\left[\D u(y)-\D v(\f y)\right]g(\f x,\f z)
\\[4pt]
    &-2e^{2u}
    \cos{2v}\left[\D u(z)-\D v(\f z)\right]
    +2e^{2u}\sin{2v}\left[\D u(\f z)+\D v(z)\right]g(x,\f y)
\\[4pt]
    &-2e^{2u}
    \cos{2v}\left[\D u(y)-\D v(\f y)\right]
    +2e^{2u}\sin{2v}\left[\D u(\f y)+\D v(y)\right]g(x,\f z).
\end{aligned}
\end{equation}

The Sasaki-like condition  \eqref{defsl} also reads as
\begin{equation}\label{defsl1}
F(x,y,z)=g(\f x,\f y)\eta(z)+g(\f x,\f z)\eta(y).
\end{equation}
We obtain the Sasaki-like condition for the metric $\overline g$
substituting \eqref{cct} into \eqref{defsl1} which yields
\begin{equation}\label{defbar}
\begin{aligned}
\overline F(x,y,z)=e^{w+2u}&\Big\{\cos{2v}\bigl[\eta(z)g(\f x,\f
y)+\eta(y)g(\f x,\f z)\bigr]\\&-\sin{2v}\bigl[\eta(z)g(x,\f
y)+\eta(y)g(x,\f z)\bigr]\Big\}.
\end{aligned}
\end{equation}
Substitute \eqref{defsl1} into \eqref{ff} to get
\begin{subequations}\label{ff1}
\begin{equation}
\begin{split}
   \overline{F} (x,y,z)&=e^{2w}\eta(x)\left\{\eta(y)\D w(\f z)+\eta(z)\D w(\f y)\right\}
\\[4pt]
    &+e^{2u}\Bigl\{
    \cos{2v}\left[\eta(z)g(\f x,\f y)+\eta(y)g(\f x,\f z)\right]\\[4pt]
    &\phantom{=e^{2u}\Bigl\{}
    -\sin{2v}\left[\eta(z)g(x,\f y)+\eta(y)g(x,\f z)\right]\Bigr\}
\\[4pt]
\end{split}
\end{equation}
\begin{equation}
\begin{split}
    &-e^{2u}\Bigl\{%
    \left\{%
    \cos{2v}\left[\D u(\f z)+\D v(z)\right]
    +\sin{2v}\left[\D u(z)-\D v(\f z)\right]\right\}g(\f x,\f y)
\\[4pt]
    &\phantom{-e^{2u}\Bigl\{}%
    +\left\{%
    \cos{2v}\left[\D u(\f y)+\D v(y)\right]
    +\sin{2v}\left[\D u(y)-\D v(\f y)\right]\right\}g(\f x,\f z)
\\[4pt]
    &\phantom{-e^{2u}\Bigl\{}%
    +\left\{%
    \cos{2v}\left[\D u(z)-\D v(\f z)\right]
    -\sin{2v}\left[\D u(\f z)+\D v(z)\right]\right\}g(x,\f y)
\\[4pt]
    &\phantom{-e^{2u}\Bigl\{}%
    +\left\{%
    \cos{2v}\left[\D u(y)-\D v(\f y)\right]
    -\sin{2v}\left[\D u(\f y)+\D v(y)\right]\right\}g(x,\f z)
    \Bigr\}.
\end{split}
\end{equation}
\end{subequations}
The equalities \eqref{ff1} and \eqref{defbar} imply
\begin{equation}\label{ff2}
\begin{split}
    &(1-e^w)e^{2u}\Bigl\{
    \cos{2v}\left[\eta(z)g(\f x,\f y)+\eta(y)g(\f x,\f z)\right]\\[4pt]
    &\phantom{(1-e^w)e^{2u}\Bigl\{}
    -\sin{2v}\left[\eta(z)g(x,\f y)+\eta(y)g(x,\f z)\right]\Bigr\}
\\[4pt]
    &-e^{2u}\Bigl\{%
    \left\{%
    \cos{2v}\left[\D u(\f z)+\D v(z)\right]
    +\sin{2v}\left[\D u(z)-\D v(\f z)\right]\right\}g(\f x,\f y)
\\[4pt]
    &\phantom{-e^{2u}\Bigl\{}%
    +\left\{%
    \cos{2v}\left[\D u(\f y)+\D v(y)\right]
    +\sin{2v}\left[\D u(y)-\D v(\f y)\right]\right\}g(\f x,\f z)
\\[4pt]
    &\phantom{-e^{2u}\Bigl\{}%
    +\left\{%
    \cos{2v}\left[\D u(z)-\D v(\f z)\right]
    -\sin{2v}\left[\D u(\f z)+\D v(z)\right]\right\}g(x,\f y)
\\[4pt]
    &\phantom{-e^{2u}\Bigl\{}%
    +\left\{%
    \cos{2v}\left[\D u(y)-\D v(\f y)\right]
    -\sin{2v}\left[\D u(\f y)+\D v(y)\right]\right\}g(x,\f z)
    \Bigr\}
\\[4pt]
    &%
    +e^{2w}\eta(x)\left\{\eta(y)\D w(\f z)+\eta(z)\D w(\f y)\right\}
    =0.
\end{split}
\end{equation}
Set $x=y=\xi$ into \eqref{ff2} to get
\begin{equation}\label{www}
\D w(\f z)=0.
\end{equation}
 Now, using \eqref{www} we write
\eqref{ff2} in  the form
\begin{equation}\label{ff3}
A(z)g(\f x,\f y)+B(z)g(x,\f y)+A(y)g(\f x,\f z)+B(y)g(x,\f z)=0,
\end{equation}
where the 1-forms $A$ and $B$ are defined by
\begin{equation}\label{ggg}
\begin{split}
A(z)=\cos{2v}\left[(e^w-1)\eta(z)+\D u(\f z)+\D v(z)\right]
    +\sin{2v}\left[\D u(z)-\D v(\f z)\right],\\[4pt]
B(z)=\sin{2v}\left[(e^w-1)\eta(z)+\D u(\f z)+\D v(z)\right]
    -\cos{2v}\left[\D u(z)-\D v(\f z)\right].
\end{split}
\end{equation}
Taking the trace of \eqref{ff3} with respect to $x=e_i$, $z=e_i$
and $y= e_i$, $z=e_i$ to get
\begin{equation}\label{ff4}
 - (2n+1)A(z)+\eta(z)A(\xi)
+B(\f z)=0, \quad
    A(z)-\eta(z)A(\xi) -B(\f z)=0.
\end{equation}
We derive from \eqref{ff4} that $A(z)=0$. Similarly, we obtain
$B(z)=0$. Now, \eqref{ggg} imply
\begin{equation}\label{ggg1}
\begin{split}
&\cos{2v}\left[\D u(\f z)+\D v(z)\right]
    +\sin{2v}\left[\D u(z)-\D v(\f z)\right]=(1-e^w)\cos{2v}\ \eta(z),\\[4pt]
&\sin{2v}\left[\D u(\f z)+\D v(z)\right]
    -\cos{2v}\left[\D u(z)-\D v(\f z)\right]=(1-e^w)\sin{2v}\ \eta(z).
\end{split}
\end{equation}

Comparing \eqref{defsl1} and \eqref{ff} we derive
\begin{prop}\label{prop:Sasaki}
Let $\M$ be a Sasaki-like almost contact complex Rie\-mann\-ian
manifold. Then the structure $(\f,
\overline{\xi},\overline{\eta},\overline g)$ defined by
\eqref{cct} is Sasaki-like if and only if the smooth functions
$u,v,w$ satisfy the following conditions
\begin{equation}\label{sssl}
dw\circ\f=0,\quad \D u-\D v\circ\f=0,\quad \D u\circ\f+\D v=(1-e^w)\eta.
\end{equation}
In particular
\[
\D u(\xi)=0,\qquad \D v(\xi)=1-e^w.
\]
In the case $w=0$ the global smooth functions $u$ and $v$ does not
depend on $\xi$ and are locally defined on the complex submanifold
$N^{2n}$, $TN=H$, and the complex valued function $u+\sqrt{-1}v$
is a holomorphic function on $N^{2n}$.
\end{prop}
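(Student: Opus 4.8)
The plan is to convert the Sasaki-like requirement for the transformed structure into pointwise conditions on $u,v,w$ by comparing two descriptions of the transformed fundamental tensor $\overline F$. On one side, $(\f,\overline\xi,\overline\eta,\overline g)$ is Sasaki-like exactly when $\overline F$ has the shape prescribed by \eqref{defsl1} for the metric $\overline g$; inserting \eqref{cct} into \eqref{defsl1} records this requirement as \eqref{defbar}. On the other side, since the ambient structure is already Sasaki-like, I would substitute \eqref{defsl1} into the transformation law \eqref{ff} to obtain the explicit form \eqref{ff1} of $\overline F$. Equating the two descriptions turns the Sasaki-like condition for the transformed structure into the single tensorial identity \eqref{ff2}, valid for all vector fields, so the entire problem reduces to analysing \eqref{ff2}.

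The next step is to peel off the individual conditions from \eqref{ff2}. Evaluating at $x=y=\xi$ and using $\f\xi=0$, $\eta(\xi)=1$ isolates $\D w\circ\f=0$, recorded as \eqref{www}. Feeding \eqref{www} back into \eqref{ff2} collapses it to the bilinear identity \eqref{ff3} governed by the auxiliary $1$-forms $A,B$ of \eqref{ggg}. Contracting \eqref{ff3} over a $\f$-adapted orthonormal frame in the index pairs $(x,z)$ and $(y,z)$, while using the B-metric identities $g(\f X,\f Y)=-g(X,Y)+\eta(X)\eta(Y)$, $g(\f x,y)=g(x,\f y)$ and $g(x,\xi)=\eta(x)$, produces the scalar pair \eqref{ff4}; solving \eqref{ff4} forces $A\equiv0$ and $B|_H\equiv0$, after which resubstituting $A=0$ and $B=B(\xi)\,\eta$ into \eqref{ff3} also eliminates the remaining component $B(\xi)$, so $A\equiv B\equiv0$. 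Written out through \eqref{ggg}, this is precisely the system \eqref{ggg1}. Every step here is reversible, so the chain delivers the full equivalence, not merely a necessary condition.

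It then remains to solve \eqref{ggg1}, which I would view as a linear system in the two $1$-forms $\D u\circ\f+\D v$ and $\D u-\D v\circ\f$ with coefficient matrix $\left(\begin{smallmatrix}\cos2v&\sin2v\\ \sin2v&-\cos2v\end{smallmatrix}\right)$ of determinant $-1$. Inversion is immediate and returns $\D u-\D v\circ\f=0$ together with $\D u\circ\f+\D v=(1-e^w)\eta$, which combined with \eqref{www} constitute exactly \eqref{sssl}. Evaluating these last two relations at $z=\xi$, again using $\f\xi=0$ and $\eta(\xi)=1$, gives $\D u(\xi)=0$ and $\D v(\xi)=1-e^w$.

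For the case $w=0$ I would specialise. Then $\D u(\xi)=\D v(\xi)=0$, so $u$ and $v$ are constant along the flow of $\xi$ and descend to the local integral submanifold $N^{2n}$ with $TN^{2n}=H$ and $J=\f|_H$, which is integrable because $\D\eta=0$ on a Sasaki-like manifold. Restricting the remaining two relations of \eqref{sssl} to $H$ yields $\D u(X)=\D v(JX)$ and $\D u(JX)=-\D v(X)$ for $X\in H$, which are the Cauchy--Riemann equations $\D(u+\sqrt{-1}\,v)\circ J=\sqrt{-1}\,\D(u+\sqrt{-1}\,v)$; hence $u+\sqrt{-1}\,v$ is holomorphic on $N^{2n}$. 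The main obstacle in this argument is essentially bookkeeping: faithfully carrying the Sasaki-like substitution \eqref{defsl1} through the many terms of \eqref{ff} to reach the compact identity \eqref{ff2}. Once that collapse is secured, the trace argument and the $2\times2$ inversion are routine.
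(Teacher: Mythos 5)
Your proposal reproduces the paper's argument essentially verbatim: equate the Sasaki-like form \eqref{defbar} of $\overline F$ with the transformed expression \eqref{ff1}, extract \eqref{www} by setting $x=y=\xi$, reduce to \eqref{ff3} with the $1$-forms $A,B$ of \eqref{ggg}, kill them by the trace argument \eqref{ff4}, and invert the $2\times 2$ system \eqref{ggg1}. The only (welcome) additions are your explicit handling of the residual component $B(\xi)$ and the verification of the Cauchy--Riemann equations for $u+\sqrt{-1}\,v$, both of which the paper asserts without detail; the argument is correct.
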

\begin{proof}
%Substitute $\alpha=e^{2u}\cos{2v}, \beta=e^{2v}\sin{2v}$ into
%\eqref{ggg1} and
Solve the %obtained
linear system \eqref{ggg1} to get the second and the third
equality into \eqref{sssl}. Now, \eqref{www} completes the proof
of \eqref{sssl}.
\end{proof}

\subsection{Contact homothetic transformations}
Let us consider contact homothetic transformations of an almost
contact complex Riemannian manifold $\M$. Since the functions $u$,
$v$, $w$ are constant, it follows from \eqref{cct} using the
Koszul formula \eqref{koszul} that the Levi-Civita connections
$\overline{\n}$ and $\n$ of the metrics $\overline g$ and $g$,
respectively, are connected by the formula
\begin{equation}\label{barl}
\overline{\n}_xy=\n_xy+e^{2(u-w)}\sin{2v}\ g(\f x,\f y)\xi
-\left(e^{-2w}-e^{2(u-w)}\cos{2v}\right) g(x,\f y)\xi.
\end{equation}

For the corresponding curvature tensors $\overline R$ and $R$   we
obtain from \eqref{barl} that
\begin{equation}\label{barRR}
\begin{aligned}
\overline{R}(x,y)z&={R}(x,y)z\\[4pt]
&+e^{2(u-w)}\sin{2v} %
\left\{g(y,\f z)\eta(x)\xi-g(\f y,\f z)\f x\right.\\[4pt]
&\phantom{+e^{2(u-w)}\sin{2v}}\left.-g(x,\f z)\eta(y)\xi+g(\f x,\f
z)\f
y\right\}\\[4pt]
 &+\left(e^{-2w}-e^{2(u-w)}\cos{2v}\right)\left\{g(\f y,\f
z)\eta(x)\xi+g(y,\f z)\f x\right.\\[4pt]
&\phantom{+\left(e^{-2w}-e^{2(u-w)}\cos{2v}\right)}\left.-g(\f
x,\f z)\eta(y)\xi-g(x,\f z)\f y\right\}.
\end{aligned}
\end{equation}
We have
\begin{prop}\label{homric}
The Ricci tensor of an almost contact complex Riemannian manifold
is invariant under a contact homothetic transformation,
\begin{equation}\label{ri}
\overline{Ric}=Ric.
\end{equation}
Consequently, we obtain
\begin{equation}\label{scal}
\begin{array}{l}
\overline{Scal}=e^{-2u}\cos{2v}\ Scal-e^{-2u}\sin{2v}\ Scal^*
+\left(e^{-2w}-e^{-2u}\cos{2v}\right)Ric(\xi,\xi),\\[4pt]
\overline{Scal}^*=e^{-2u}\sin{2v}\ Scal+e^{-2u}\cos{2v}\ Scal^*
-e^{-2u}\sin{2v}\ Ric(\xi,\xi).
\end{array}
\end{equation}
In particular, the scalar curvatures of a Sasaki-like almost
contact complex Riemannian manifold changes under a contact
homothetic transformation with $w=0$ as follows
\begin{equation}\label{scalsas}
\begin{array}{l}
\overline{Scal}=e^{-2u}\cos{2v}\ Scal-e^{-2u}\sin{2v}\ Scal^*
+2n\left(1-e^{-2u}\cos{2v}\right),\\[4pt]
\overline{Scal}^*=e^{-2u}\sin{2v}\ Scal+e^{-2u}\cos{2v}\ Scal^*
-2n\ e^{-2u}\sin{2v}.
\end{array}
\end{equation}
\end{prop}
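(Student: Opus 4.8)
The plan is to read off both parts of the statement from the curvature transformation law \eqref{barRR} by taking two successive traces. For the Ricci invariance \eqref{ri} the key observation is that the $(0,2)$ Ricci tensor is the contraction of the $(1,3)$ curvature operator over its first slot, $Ric(y,z)=\operatorname{tr}\{x\mapsto R(x,y)z\}$, which is a purely algebraic contraction and hence independent of the metric used to form it. Consequently $\overline{Ric}(y,z)-Ric(y,z)$ is exactly the first-slot trace of the two bracketed correction terms in \eqref{barRR}, and it suffices to show this trace vanishes. I would compute it in a $g$-orthonormal adapted frame $\{e_0=\xi,e_1,\dots,e_{2n}\}$ using three facts: $\operatorname{tr}\f=\sum_i\varepsilon_i g(\f e_i,e_i)=0$; the symmetry $g(\f x,y)=g(x,\f y)$ coming from the symmetry of $\widetilde g$; and the completeness relation $\sum_i\varepsilon_i g(v,e_i)g(e_i,w)=g(v,w)$ together with $\f^2=-\Id+\eta\otimes\xi$. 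Tracing the first bracket (coefficient $e^{2(u-w)}\sin 2v$) produces the pair $g(y,\f z)$ and $-g(y,\f z)$, which cancel; tracing the second bracket (coefficient $e^{-2w}-e^{2(u-w)}\cos 2v$) produces $g(\f y,\f z)$ and $-g(\f y,\f z)$, which also cancel. This gives \eqref{ri}.

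For the scalar curvatures I would then trace $Ric$ itself against $\overline{g}$. From \eqref{cct} one checks that $\xi$ remains $\overline{g}$-orthogonal to $H$ with $\overline{g}(\xi,\xi)=e^{2w}$, while on $H$ one has $\overline{g}|_H=e^{2u}\bigl(\cos 2v\,h+\sin 2v\,\widetilde h\bigr)$, where $h=g|_H$ and $\widetilde h=\widetilde g|_H$. The $\xi$-direction therefore contributes $e^{-2w}Ric(\xi,\xi)$ to $\overline{Scal}$. The crux is the horizontal trace: writing $J=\f|_H$ so that $\widetilde h=h(J\cdot,\cdot)$ and $J^2=-\Id$ on $H$, the endomorphism $\cos 2v\,\Id+\sin 2v\,J$ is invertible with inverse $\cos 2v\,\Id-\sin 2v\,J$, since $(\cos 2v\,\Id+\sin 2v\,J)(\cos 2v\,\Id-\sin 2v\,J)=(\cos^2 2v+\sin^2 2v)\Id=\Id$. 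Hence raising an index with $\overline{g}|_H$ instead of with $h$ amounts to applying $e^{-2u}(\cos 2v\,\Id-\sin 2v\,J)$, and using the $g$-self-adjointness of $J$ one obtains
\[
\operatorname{tr}_{\overline{g}|_H}(Ric|_H)=e^{-2u}\bigl[\cos 2v\;Scal_H-\sin 2v\;Scal^*\bigr],
\]
where $Scal_H=\sum_{i=1}^{2n}\varepsilon_i Ric(e_i,e_i)=Scal-Ric(\xi,\xi)$ and $\sum_{i=1}^{2n}\varepsilon_i Ric(e_i,\f e_i)=Scal^*$ (the $i=0$ term drops as $\f\xi=0$). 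Adding the $\xi$-contribution and substituting $Scal_H=Scal-Ric(\xi,\xi)$ yields the first line of \eqref{scal}.

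For $\overline{Scal}^*$ I would repeat the horizontal trace with the $(0,2)$ tensor $S^*(X,Y)=Ric(X,\f Y)$ in place of $Ric|_H$; the $\xi$-term again vanishes because $\f\xi=0$, while $\operatorname{tr}_h S^*=Scal^*$ and $J^2=-\Id$ turns the second horizontal trace $\sum_i\varepsilon_i S^*(e_i,\f e_i)=\sum_i\varepsilon_i Ric(e_i,\f^2 e_i)$ into $-Scal_H$. This gives $\operatorname{tr}_{\overline{g}|_H}(S^*)=e^{-2u}\bigl[\sin 2v\;Scal_H+\cos 2v\;Scal^*\bigr]$, and after substituting $Scal_H=Scal-Ric(\xi,\xi)$ one recovers the second line of \eqref{scal}. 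Finally, for a Sasaki-like structure with $w=0$ I would substitute $e^{-2w}=1$ and the value $Ric(\xi,\xi)=2n$ from \eqref{ricxi} into \eqref{scal} to obtain \eqref{scalsas}.

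I expect the metric inversion to be the only real obstacle: once the involution identity $(\cos 2v\,\Id+\sin 2v\,J)^{-1}=\cos 2v\,\Id-\sin 2v\,J$ is in hand, the horizontal trace collapses to the stated combination of $Scal$, $Scal^*$ and $Ric(\xi,\xi)$, and everything else reduces to careful sign bookkeeping in the adapted frame (tracking the $\varepsilon_i$ and the action of $\f$). The Ricci step, by contrast, is conceptually immediate once one notes the metric-independence of the first-slot contraction, so that the whole computation hinges on the cancellations within the two correction brackets of \eqref{barRR}.
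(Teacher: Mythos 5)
Your proposal is correct and follows essentially the same route as the paper: the Ricci invariance is obtained by tracing the two correction brackets in \eqref{barRR} (which indeed vanish by exactly the cancellations you identify), and the scalar curvatures are then read off by tracing $Ric$ against $\overline{g}$. The only cosmetic difference is in the second step, where the paper exhibits the explicit $\overline{g}$-orthonormal frame $\overline e_i=e^{-u}(\cos v\ e_i-\sin v\ \f e_i)$, $\overline e_{n+i}=\f\overline e_i$, whereas you invert the horizontal metric via $(\cos 2v\,\Id+\sin 2v\,J)^{-1}=\cos 2v\,\Id-\sin 2v\,J$; these are equivalent computations.
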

\begin{proof}
Taking the trace of \eqref{barRR} we get $\overline{Ric}=Ric$.

Consider the basis $\{\overline e_0=\overline\xi$, $\overline
e_1$, $\dots$, $\overline e_{n}$, $\overline e_{n+1}=\f \overline
e_1$, $\dots$, $\overline e_{2n}=\f \overline e_n\}$, where
\[
\overline e_i=e^{-u}\left\{\cos v\ e_i-\sin v\ \f
e_i\right\},\quad i=1,\dots, n.
\]
It is easy to check that this basis is orthonormal for the metric
$\overline g$. Then \eqref{ri} gives
$\overline{Scal}=\sum_{i=0}^{2n}\overline\varepsilon_i \overline
Ric(\overline e_i,\overline e_i)$ and
$\overline{Scal}^*=\sum_{i=0}^{2n}\overline\varepsilon_i \overline
Ric(\overline e_i,\f \overline e_i)$ which yield the formulas for
the scalar curvatures.

The formulas \eqref{scalsas} follow from \eqref{scal} and
\eqref{ricxi}.
\end{proof}
Consequently, we have
\begin{prop}
A Sasaki-like almost contact complex Riemannian manifold $\M$ is
Einstein if and only if the underlying holomorphic complex
Riemannian manifold $(N^{2n}, TN^{2n}=H,J,h)$  is an Einstein
manifold with scalar curvature not depending on the vertical
direction $\xi$.
\end{prop}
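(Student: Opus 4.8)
The plan is to reduce the Einstein equation on $M$ to a condition purely on the leaves of the horizontal foliation, reading off the Ricci data already recorded in \propref{vercurv}. Relative to the $g$-orthogonal splitting $TM=\R\,\xi\oplus H$, the identities \eqref{ric} and \eqref{ricxi} give the three blocks of the Ricci tensor explicitly: $Ric(\xi,\xi)=2n$, $Ric(Y,\xi)=2n\,\eta(Y)=0$ for horizontal $Y$, and $Ric(Y,Z)=Ric^h(Y,Z)$, where $h=g|_H$ and $Ric^h$ is the intrinsic Ricci tensor of the leaf $(N^{2n},J,h)$, which by the preceding subsection is a holomorphic complex Riemannian manifold. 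Since $g(\xi,\xi)=1$, $g(Y,\xi)=0$ and $g(Y,Z)=h(Y,Z)$, these are exactly the components one must compare against $\lambda g$, so the whole proof amounts to matching a horizontal factor coming from $Ric^h$ against the rigid vertical value $2n$.

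First I would prove the forward implication. If $M$ is Einstein, then $Ric=\lambda g$ with $\lambda$ constant; evaluating on $\xi$ and using $Ric(\xi,\xi)=2n$, $g(\xi,\xi)=1$ pins $\lambda=2n$, while the mixed block is automatically consistent because $Ric(Y,\xi)=0=2n\,g(Y,\xi)$. Restricting $Ric=2n\,g$ to $H$ and inserting $Ric(Y,Z)=Ric^h(Y,Z)$, $g(Y,Z)=h(Y,Z)$ yields $Ric^h=2n\,h$, so every leaf is Einstein with Einstein constant $2n$ and constant scalar curvature $Scal^h=4n^2$, which in particular does not vary in the direction $\xi$. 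For the converse, a leaf that is Einstein has $Ric^h=\mu\,h$; Schur's lemma makes $\mu$ constant along each connected leaf, and the hypothesis that $Scal^h=2n\,\mu$ be independent of $\xi$ promotes $\mu$ to a single constant on all of $M$. Assembling the Ricci tensor block-diagonally, with horizontal factor $\mu$ and vertical factor $2n$, one sees that it equals $\lambda g$ only if $\lambda=\mu$ (horizontal) and $\lambda=2n$ (vertical); since the vertical identity $Ric(\xi,\xi)=2n$ admits no other constant, the compatible normalisation is $\mu=2n$, and with it $Ric=2n\,g$, so $M$ is Einstein. Thus ``$N$ Einstein with $\xi$-independent scalar curvature'' is precisely the assertion $Ric^h=2n\,h$ holding uniformly.

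The step I expect to be the genuine obstacle is controlling the dependence of the horizontal metric $h=g|_H$ on the direction $\xi$. Along an $S^1$-solvable extension $h$ really does move with $\xi$ --- for instance $h=\cos 2t\,h'-\sin 2t\,\widetilde h'$ in Example~3 --- so the leaf metric, and hence $Ric^h$, forms a genuinely $t$-dependent family, and being Einstein on one leaf carries no automatic information about the neighbouring leaves. This is exactly where the clause ``scalar curvature not depending on $\xi$'' is indispensable: combined with Schur's lemma it upgrades the leafwise-constant Einstein factors to a single global constant, which the rigid vertical value $Ric(\xi,\xi)=2n$ supplied by \eqref{ricxi} then identifies with $2n$, closing the equivalence.
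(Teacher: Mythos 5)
Your forward direction is fine and coincides with the first line of the paper's proof: evaluating $Ric=\lambda g$ on $\xi$ with \eqref{ricxi} forces $\lambda=2n$, and \eqref{ric} then turns the horizontal block into $Ric^h=2n\,h$, so $Scal^h=4n^2$ is constant and in particular $\xi$-independent. The gap is in the converse. The hypothesis ``$N$ Einstein with $Scal^h$ independent of $\xi$'' only yields $Ric^h=\mu\,h$ for a single constant $\mu$ (Schur on each leaf plus $\xi$-independence across leaves); nothing forces $\mu=2n$. Your closing claim that the hypothesis ``is precisely the assertion $Ric^h=2n\,h$ holding uniformly'' is therefore not true, and your own block decomposition shows that for $\mu\neq 2n$ the tensor $Ric$ is \emph{not} proportional to $g$ --- read literally, your argument refutes the ``if'' direction rather than proving it. Announcing that ``the compatible normalisation is $\mu=2n$'' is not a deduction from the hypothesis; it is an unjustified restriction of it.

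The paper closes exactly this gap with a normalisation step that your proposal omits: perform the contact homothetic transformation \eqref{cct} with $v=w=0$ and $u$ the constant determined by $e^{-2u}=4n^2/Scal^h$ (here the $\xi$-independence of $Scal^h$ is what makes $u$ a genuine constant). By Proposition~\ref{prop:Sasaki} the transformed structure is again Sasaki-like, by Proposition~\ref{homric} the Ricci tensor is unchanged, while the horizontal metric rescales to $\overline h=e^{2u}h$; hence $\overline{Ric}^{\bar h}=\mu\,h=\mu e^{-2u}\,\overline h=2n\,\overline h$ and the transformed manifold is Einstein. In other words the proposition is to be read up to contact homothety, and that homothety --- not the routine comparison of \eqref{ric} with \eqref{ricxi}, which is all you carry out --- is the substantive content of the ``if'' direction.
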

\begin{proof}
Compare \eqref{ricxi} with \eqref{ric} to see that $\M$ is an
Einstein manifold if and only if $N$ is an Einstein manifold with
Einstein constant equal to $2n$, $Ric^h=2n\,g$. Further, consider
a contact homothetic transformation with $w=v=0$ we get that
$\bigl(M,\f,\xi,\eta,\overline
g=e^{2u}g+(1-e^{2u})\eta\otimes\eta\bigr)$ is again Sasaki-like
due to Proposition~\ref{prop:Sasaki}. Applying
Proposition~\ref{homric} and \eqref{ric}, we get the following
sequence of equalities
\[
\overline{Ric}^{\bar h}=\overline{Ric}{|_H}=Ric|_H=Ric^h
=\frac{Scal}{2n}g|_H=\frac{e^{-2u}Scal^h}{2n}\overline{g}|_H,
\]
which yield $\overline{Scal}^{\bar h}=e^{-2u}Scal^h=4n^2$ by
choosing the constant $u$ to be equal to
$e^{-2u}=\frac{4n^2}{Scal^h}$, i.e. the Einstein constant of the
complex holomorphic Einstein manifold $N$ can always be made equal
to $4n^2$ which completes the proof.
\end{proof}
Suppose we have a Sasaki-like almost contact complex Riemannian
manifold which is Einstein, $Ric=2n\,g$, and make a contact
homothetic transformation
\begin{equation}\label{cctt}
\overline{\eta}=\eta,\quad \overline{\xi}=\xi,\quad
\overline{g}(x,y) = c\ g(x,y)+d\ g(x,\f y)+(1-c)\eta(x)\eta(y),
\end{equation}
where $c$, $d$ are constants. According to
Proposition~\ref{homric} we obtain using \eqref{cctt} that
\begin{equation}\label{etaein}
\begin{split}
\overline{Ric}(x,y)&=Ric(x,y)=2n\,g(x,y)
\\[4pt]
&=\frac{2n}{c^2+d^2}\bigl\{c\,\overline g(x,y) - d\,\overline
g(x,\f y) +(c^2+d^2-c)\eta(x)\eta(y)\bigr.\}.
\end{split}
\end{equation}
We may call a Sasaki-like space whose Ricci tensor satisfies
\eqref{etaein}  \emph{an $\eta$-complex-Ein\-stein Sasaki-like
manifold} and if the constant $d$ vanishes, $d=0$, we have
\emph{$\eta$-Ein\-stein} Sasaki-like space. Thus, we have shown
\begin{prop}
Any $\eta$-complex-Einstein Sasaki-like space is contact
homothetic to an Einstein Sasaki-like space.
\end{prop}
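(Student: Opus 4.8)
The plan is to produce the required Einstein metric by a single explicit application of the contact homothetic transformation \eqref{cctt}, and then to certify the outcome using the Ricci invariance of Proposition~\ref{homric} together with the Sasaki-preserving criterion of Proposition~\ref{prop:Sasaki}. So let $(M,\f,\xi,\eta,\overline g)$ be an $\eta$-complex-Einstein Sasaki-like space, meaning that its Ricci tensor obeys \eqref{etaein} for some constants with $(c,d)\neq(0,0)$. First I would apply to $\overline g$ the contact homothetic transformation of the form \eqref{cctt} with the constant parameters
\[
c_1=\frac{c}{c^2+d^2},\qquad d_1=-\frac{d}{c^2+d^2},
\]
obtaining the metric
\[
g(x,y)=c_1\,\overline g(x,y)+d_1\,\overline g(x,\f y)+(1-c_1)\,\eta(x)\eta(y).
\]

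The heart of the matter is then a one-line algebraic comparison. Substituting the values of $c_1,d_1$ shows that $g$ equals $\tfrac1{2n}$ times the right-hand side of \eqref{etaein}, so that \eqref{etaein} says precisely $\overline{Ric}=2n\,g$. Since $g$ is obtained from $\overline g$ by a contact homothetic transformation, Proposition~\ref{homric} yields $Ric^{g}=\overline{Ric}$; combining the two gives $Ric^{g}=2n\,g$, that is, $g$ is an Einstein metric with Einstein constant $2n$.

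It remains only to see that $g$ is still Sasaki-like, and here I would invoke Proposition~\ref{prop:Sasaki}. The transformation above fixes $\eta$, hence corresponds to \eqref{cct} with $w=0$ and with $u,v$ constant; for constant data the three conditions \eqref{sssl} collapse to the single requirement $e^{w}=1$, which holds. Therefore $(M,\f,\xi,\eta,g)$ is again Sasaki-like, and together with the previous paragraph it is an Einstein Sasaki-like space contact homothetic to the given one. The only hypothesis that must be watched is the nondegeneracy $(c,d)\neq(0,0)$; but this is already implicit in \eqref{etaein} through the factor $\tfrac1{c^2+d^2}$, and it is exactly what makes $c_1,d_1$ well defined and not both zero, so no genuine obstacle arises and the argument is entirely algebraic.
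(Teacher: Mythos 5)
Your proof is correct and follows essentially the same route as the paper: the paper derives \eqref{etaein} by transforming an Einstein Sasaki-like metric forward via \eqref{cctt} and then asserts the proposition, which amounts precisely to the inversion you carry out with $c_1=c/(c^2+d^2)$, $d_1=-d/(c^2+d^2)$, combined with the Ricci invariance of Proposition~\ref{homric} and the $w=0$ case of Proposition~\ref{prop:Sasaki}. Your write-up merely makes explicit the inverse transformation and the preservation of the Sasaki-like condition, both of which the paper leaves implicit.
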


\section*{Acknowledgments}
S.I. is  partially supported by the Contract 156/2013 with the
University of Sofia ,,St. Kliment Ohridski``. M.M. and H.M. are
partially supported by the project NI13-FMI-002 of the Scientific
Research Fund at the University of Plovdiv. Authors are partially
supported by the project ,,Center of Excellence for Applications
of Mathematics`` of the German Academic Exchange Service (DAAD).

The authors would like to thank Professor Marisa Fernandez for
very useful comments.

%%%%%%%%%%%%%%%%%%%%%%%%%%%%%%%%%%%%%%%%%%%%%%%%%%%%%%%%%%%%%%%%%%%%%%
\end{document}